\newtheorem{theorem}{Theorem}
\newtheorem{remark}[theorem]{Remark}
\newtheorem{proposition}[theorem]{Proposition}
\newtheorem{corollary}[theorem]{Corollary}
\newtheorem{lemma}[theorem]{Lemma}
\newtheorem*{thm*}{Theorem}
\begin{document}
\title{Local limit theorem in deterministic systems}
\author{Zemer Kosloff}
\thanks{The research of Z.K. was partially supported by ISF grant No. 1570/17}
\address{Einstein Institute of Mathematics,
	Hebrew University of Jerusalem, Edmond J. Safra Campus, Jerusalem 91904,
	Israel}
\email{zemer.kosloff@mail.huji.ac.il}
\author{Dalibor Volny}
\address{Laboratoire de Math\'{e}matiques Raphael Salem,
	UMR 6085, Universit\'e de Rouen, Normandie, France}
\email{dalibor.volny@univ-rouen.fr}

\maketitle

\small{\it $\ \ \ \ \ \ \ \ \ $Dedicated to Manfred Denker whose work is an inspiration for us.}
\begin{abstract}
We show that for every ergodic and aperiodic probability preserving system, there exists a $\mathbb{Z}$ valued, square integrable function $f$ such that the partial sums process of the time series $\left\{f\circ T^i\right\}_{i=0}^\infty$ satisfies the lattice local limit theorem. 
\end{abstract}

\section{introduction}
Given an ergodic, aperiodic and probability measure preserving dynamical system $\left(X,\mathcal{B},m,T\right)$, we show the existence of a measurable square integrable function $f$ with zero mean  such that its corresponding ergodic sums process  $S_n(f):=\sum_{k=0}^{n-1}f\circ T^k$ satisfies the lattice local central limit theorem. \\
A centered function $f:X\to\mathbb{R}$ satisfies the central limit theorem if for all $u\in \mathbb{R}$
\[
m\left(\frac{S_n(f)}{\left\|S_n(f)\right\|_2}\leq u\right)\xrightarrow[n\to\infty]{}\int_{-\infty}^{u}e^{-\frac{x^2}{2}}dx.
\]
A function $f:X\to\mathbb{Z}$ with $\int fdm=0$ such that $ \lim_{n\to\infty}\frac{\left\|S_n(f)\right\|_2^2}{n}=\sigma^2>0$ satisfies a lattice local central limit theorem if
\[
\sup_{x\in\mathbb{Z}}\left|\sqrt{n}\cdot m\left(S_n(f)=x\right)-\frac{e^{-x^2/\left(2n\sigma^2\right)}}{\sqrt{2\pi\sigma ^2}}\right|\xrightarrow[n\to\infty]{}0.
\]
There is also a non-lattice version of the local limit theorem which we do not consider in this paper. A function which satisfies the central limit theorem will be called a CLT function and if $f$  satisfies a local central limit theorem (whether lattice or non-lattice) we will say that $f$ is a LCLT function.\\
In case the measure theoretic entropy of the system is positive, if follows from the Sinai factor theorem that there exists a function $f:X\to \mathbb{Z}$ taking finitely many values such that the sequence $\left\{f\circ T^n\right\}_{n=0}^{\infty}$ is distributed as an i.i.d. sequence. From this it is easy to construct a LCLT function.\\
The question of existence of a central limit function for a zero entropy system such as an irrational rotation is more subtle. In the case of certain zero entropy Gaussian dynamical systems, Maruyama showed in \cite{Maruyama75} existence of CLT functions such that the variance of $S_n(f)$ grows linearly with $n$. Some years later, the seminal paper of Burton and Denker \cite{BurDen87} showed that for every aperiodic dynamical system there exists a function $f$ which satisfies the central limit theorem.\\
Denote $L_0^p = \{f\in L^2(m) : \int f dm =0\}$, $1\leq p\leq \infty$. It was observed by Burton and Denker that the set of CLT functions is dense in $L_0^2$. By \cite{Vol90} (cf. also \cite{DurVol2010},\cite{LiaVol97}) for all $1\leq p\leq \infty$, there 
exists a dense $G_\delta$ set of functions $f\in L_0^p$ for which the closure of the set of distributions of
$S_n(f)/\sqrt{n}$ contains all probability laws. A similar result (with a smaller set of limit points) holds for
$S_n(f) / ||S_n(f)||_2$. Consequently the set of CLT functions is a meager set in $L_0^p$.\\
Following \cite{BurDen87}, several extensions and improvements regarding existence and bounds of the regularity of CLT functions were done, see for example \cite{Kato87},\cite{Lacey91},\cite{Lacey93},\cite{RWeber}. The most relevant to this work is \cite{Vol98} where for every aperiodic dynamical system, a function satisfying the invariance principle and the almost sure invariance principle was constructed. More recently the question of weak convergence to other distributions was studied in \cite{AW},\cite{ThWe12}.\\
In the dynamical systems setting, it is in general a nontrivial problem to determine whether a function which satisfies the central limit theorem also satisfies the local central limit theorem. In fact, even in the nicer setting of chaotic (piecewise) smooth dynamical systems a local CLT is usually proved under more stringent spectral conditions, see for example \cite{RouEge83},\cite{GuHa88},\cite{AD01},\cite{ADSZ04} and \cite{GouLLT05}.\\
The methods of proving the central limit theorem in \cite{BurDen87} and \cite{Vol98} involved non-spectral tools which are not adapted for getting a local CLT. Moreover, the resulting partial sum process of the function takes uncountably many values. Our main theorem is the following.
\begin{thm*}[See Theorem \ref{thm: main 2}]
For every ergodic, aperiodic and probability measure preserving dynamical system $\left(X,\mathcal{B},m,T\right)$ there exists a square integrable function $f:X\to\mathbb{Z}$ with $\int fdm=0$ which satisfies the lattice local central limit theorem. 	
\end{thm*}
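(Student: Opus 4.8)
The plan is to construct $f$ so that for every large $n$ the ergodic sum $S_n(f)$ coincides, off a set of $m$-measure $o(n^{-1/2})$, with a sum of \emph{independent}, $\ZZ$-valued, mean-zero random variables whose common lattice span equals $1$; once this is in hand, the lattice local limit theorem follows from the classical local limit theorem for such sums together with the Fourier-inversion formula for $\ZZ$-valued sums. The only structural input from the dynamics is the Rokhlin lemma: aperiodicity furnishes, for every height $h$ and every $\varepsilon>0$, a Rokhlin tower of height $h$ whose base can, by a standard argument using ergodicity, be partitioned into finitely many ``colours'' that are, up to an arbitrarily small error, independent both of the height coordinate along the tower and of the return-time/return-position structure of $T$ to that base --- and one may moreover arrange such colourings at different scales to be (approximately) mutually independent.

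First I would fix a very rapidly increasing sequence of heights $h_j$ and, for each $j$, a Rokhlin tower $\mathcal T_j$ of height $h_j$ carrying such an independent colouring. On $\mathcal T_j$ I would place a $\ZZ$-valued function $g_j$ supported on a sparse, evenly spaced family of levels that cuts the tower into ``super-blocks'' of length $L_j=1/\rho_j$, with $\sum_j\rho_j<\infty$, the value of $g_j$ on the distinguished level of each super-block being a function of the colour chosen to be a centred, span-$1$ integer variable (for instance $\pm1$ with probabilities $p$ and $0$ with probability $1-2p$), so that reading up a column of $\mathcal T_j$ one sees an i.i.d.\ string of such increments. Then I set $f=\sum_j g_j$; because the supports $\{g_j\ne 0\}$ are pairwise disjoint and have summable measure, $f$ is $\ZZ$-valued and lies in $L_0^2$ --- and note that it is the smallness of the \emph{supports}, not of the amplitudes, that accommodates integer values at every one of the infinitely many scales. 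The colouring independence then yields, off a set whose measure is governed by the tower and independence errors, a representation $S_n(f)=\sum_j T_{n,j}$ with the $T_{n,j}$ mutually independent, each $T_{n,j}$ being a sum of $\asymp n\rho_j$ i.i.d.\ copies of the scale-$j$ increment, whence $\|S_n(f)\|_2^2/n\to\sigma^2:=2p\sum_j\rho_j\in(0,\infty)$ and a Lindeberg argument (as in Burton--Denker and Vol\'ny) gives the central limit theorem.

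For the local theorem write, by Fourier inversion, $\sqrt n\,m(S_n(f)=x)=\frac{\sqrt n}{2\pi}\int_{-\pi}^{\pi}\re^{-\ri tx}\varphi_n(t)\,\rd t$, with $\varphi_n(t)=\rE[\re^{\ri t S_n(f)}]$, and compare with the Gaussian density through the usual three-range split of $[-\pi,\pi]$. The independence gives $|\varphi_n(t)|\le\prod_j|1-2p(1-\cos t)|^{\,n\rho_j}+o(n^{-1/2})$. On $\delta\le|t|\le\pi$ one then has $|\varphi_n(t)|\le\re^{-c(\delta)n}+o(n^{-1/2})$, and the summability $\sum_j\rho_j<\infty$ is precisely what upgrades the product to honest exponential decay in $n$ (this is the lattice-aperiodicity input); multiplied by $\sqrt n$ and integrated this is $o(1)$. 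On $A/\sqrt n\le|t|\le\delta$ one has $|\varphi_n(t)|\le\re^{-c'\,nt^2}$, whose integral against $\sqrt n$ is $o_A(1)$ as $A\to\infty$. On $|t|\le A/\sqrt n$ a Taylor expansion yields $\varphi_n(t)=\re^{-\sigma^2 nt^2/2}(1+o_n(1))$ uniformly, reproducing the Gaussian up to $o_n(1)+o_A(1)$. Summing the three ranges and letting $n\to\infty$ then $A\to\infty$ gives $\limsup_n\sup_{x\in\ZZ}\big|\sqrt n\,m(S_n(f)=x)-\re^{-x^2/(2n\sigma^2)}/\sqrt{2\pi\sigma^2}\big|\le\epsilon(A)$ with $\epsilon(A)\to 0$, which is the claim.

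The main obstacle is the second step: arranging, inside an \emph{arbitrary} aperiodic --- hence possibly zero-entropy --- system, a single fixed $L^2$ function whose partial sums admit the above representation \emph{uniformly in $n$}, with integer values and a genuine span-$1$ lattice structure. This splits into (i) the technical realization of the ``independent colouring'' device (colourings independent across scales and of the return structure, with super-tiny errors $\varepsilon_j$), and (ii) a delicate choice of the parameters $h_j,\rho_j,\varepsilon_j$ along a fast sequence so that, for every $n$, the contributions of scales with $h_j>n$ and the damage from the orbit's excursions out of the towers amount in total to no more than $o(n^{-1/2})$ in measure (roughly, the heights must grow so fast that the first one exceeding $n$ already dwarfs $n$, and the colouring errors must decay fast enough against all later heights), while the variance still stabilizes at a positive constant times $n$. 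Reconciling the $\ZZ$-valued requirement --- which forbids shrinking the amplitudes and so forces the balancing to be done through the support densities --- with linear variance growth and with the uniform-in-$n$ coupling is the heart of the matter, and obtaining the exponential decay of $\varphi_n$ on the whole band away from the origin (the lattice-aperiodicity estimate, used here to exclude any hidden sublattice), which the plain central limit theorem does not require, is the key new ingredient beyond Burton--Denker and Vol\'ny.
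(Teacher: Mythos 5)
Your overall architecture (realize approximately independent integer increments on Rokhlin towers at many scales, sum over scales, then do Fourier inversion with a three-range split) is the right family of ideas, but the central step as you state it cannot be carried out, and the difficulty you flag as ``the main obstacle'' is not just technical --- it is an entropy obstruction that forces a different design. You ask for a representation $S_n(f)=\sum_j T_{n,j}$, uniformly in $n$, in which each fixed scale $j$ contributes a sum of $\asymp n\rho_j$ i.i.d.\ copies of a nondegenerate integer increment. For a fixed scale $j$ and $n\to\infty$ this means the process $(g_j\circ T^k)_{k\ge 0}$ is (asymptotically) a nondegenerate i.i.d.\ process, which has positive entropy; since the system may have zero entropy, this is impossible. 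An ``independent colouring'' of the base of $\mathcal T_j$ with a fixed error $\varepsilon_j$ does not help: the orbit returns to the base $\asymp n/h_j$ times, and the deviation from independence accumulates without control as $n\to\infty$. The paper's construction resolves exactly this point: at scale $k$ the increments $\bar f_k\circ T^j$ are arranged to be exactly i.i.d.\ only on a \emph{finite} window $0\le j\le 2d_k+p_k$ (Proposition \ref{prop: realz2}, via $\xi$-independent Alpern castles), and the scale-$k$ summand is the coboundary $f_k=g_k-Ug_k$, so that once $n\ge d_k$ the scale telescopes to $g_k-U^ng_k$ and needs no independence beyond the window. The small scales then contribute an \emph{independent additive} error of variance $O(n/\sqrt{\log n})$ --- not a coincidence off a set of measure $o(n^{-1/2})$ --- and Proposition \ref{prop: essential part} shows the lattice LLT is stable under such perturbations.

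A second, consequent gap is your characteristic-function bound. The exponential decay $|\varphi_n(t)|\le e^{-c(\delta)n}$ on $\delta\le|t|\le\pi$ presupposes $\asymp n$ independent span-$1$ increments with probabilities bounded below, which is precisely what the entropy obstruction forbids. In the realizable construction the probability of a nonzero span-$1$ increment at scale $k$ is of order $(\alpha_k\alpha_{k+1})^2\approx 2^{-4k}$ (it comes from the simultaneous firing of two consecutive scales with coprime weights $2^k$ and $2^k+1$ --- this is where the lattice-aperiodicity really lives), so the best available bound away from the origin is only $\exp(-c\sqrt[4]{n})$ (Lemma \ref{lem: dom away from 0}). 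This weaker bound is useless for $|t|$ between $A/\sqrt n$ and a constant, so the classical three-range split fails; the paper must instead push the quadratic (near-zero) estimate all the way out to $|x|\le\sqrt[4]{n}$ (Lemma \ref{lem: dom near 0}), using fourth-moment control of the truncated increments $\Upsilon_n(j)$. Your proposal correctly identifies both pressure points but supplies neither the coboundary/finite-window mechanism nor the substitute for genuine exponential aperiodicity, and these are the two ingredients that make the theorem true in zero entropy.
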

The construction of the aforementioned function relies on a new version of the stochastic coding theorem \cite{GrillenKre76},\cite{Kieffer80}. Namely we show that in any ergodic, aperiodic dynamical system we can realize every independent triangular array which takes finitely many values, see Proposition \ref{pro: realiz1}. We remark that for the construction of the function $f$ we need to realize a variant of a triangular array, see the beginning of Section \ref{sec: CLT} for the details.  
\subsection*{Notation}
For $z\geq 0$, the expression $x=y\pm z$ stands for $|x-y|\leq z$.  Similarly $x=a e^{\pm b}$ means $ae^{-b}\leq x\leq xe^b$. \\
For  sequences $a(n),b(n)>0$ we will write $a(n)\sim b(n)$ if $\lim_{n\to \infty}\frac{a(n)}{b(n)}=1$. In some cases it will be denoted with the little $o$ notation, that is $a(n)=b(n)+o(1)$. \\
By $a(n)=O\left(b(n)\right)$ and $a(n)\lesssim b(n)$ we mean $\limsup_{n\to\infty}\frac{a(n)}{b(n)}<\infty$.  \\
For convenience in the arithmetic arguments, $\log(\cdot)$ denotes logarithm to the base $2$ and $\ln(\cdot)$ is the standard logarithm. 
\section{The strong Alpern tower lemma and realizations of triangular arrays}
Let $A$ be a finite set, $\left\{d_n\right\}_{n=1}^\infty$ an integer valued sequence and $X_{n,m},\ n\in\mathbb{N},\ 1\leq m\leq d_n$ be an $A$-valued triangular array. In our setting this means 
\begin{itemize}
	\item For every $n\in\mathbb{N}$ and $m\in\{0,...,d_n-1\}$, $X_{n,m}:\left(\Omega,\mathcal{F},\mathbb{P}\right)\to A$.
	\item For every $n\in\mathbb{N}$, $X_{n,1},\ldots,X_{n,d_n}$ are identically distributed. 
	\item  $\left\{X_{n,m}\right\}_{n\in\mathbb{N},m\in\{1,..,d_n\}}$ is an independent array of random variables.
\end{itemize} 
This section is concerned with the following realization of triangular arrays in arbitrary ergodic measure preserving transformations. See \cite{GrillenKre76}, \cite{Kieffer80} for results in a similar flavor. 
\begin{proposition}\label{pro: realiz1}
	Let $\left(X,\mathcal{B},\mu,T\right)$ be an ergodic invertible probability measure preserving transformation. For every finite set $A$, and an $A$ valued triangular array $\mathbb{X}:=\left\{X_{n,m}\right\}_{n\in\mathbb{N},m\in\{0,..,d_n\}}$, there exists a sequence of functions $f_n:X\to A$ such that $\left\{f_n\circ T^m\right\}_{n\in\mathbb{N},m\in\{0,..,d_n-1\}}$  and $\mathbb{X} $ have the same distribution. 
\end{proposition}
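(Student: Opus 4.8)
\emph{Proof plan.} The plan is to build the functions $f_n$ one row at a time along a nested sequence of Kakutani--Rokhlin towers, exploiting the strong Alpern tower lemma of this section to make each new tower simultaneously (a) fine enough to ``see'' all the previously built functions and (b) flexible enough to code the next row of $\mathbb X$ independently of them. Write $\nu_n$ for the common law on $A$ of $X_{n,1},\dots,X_{n,d_n}$.

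I would proceed by induction on $n$, maintaining functions $f_1,\dots,f_{n-1}:X\to A$ and a finite partition $\mathcal P_{n-1}$ of $X$ such that: (i) $\{f_k\circ T^m\}_{k<n,\,0\le m<d_k}$ has the same law as $\{X_{k,m}\}_{k<n,\,0\le m<d_k}$; and (ii) every $f_k\circ T^m$ with $k<n$ and $|m|\le D_n$ (a fixed bound for the heights used so far) is $\mathcal P_{n-1}$-measurable. At stage $n$ one applies the strong Alpern tower lemma to $(X,\mathcal B,\mu,T)$ to obtain an exhaustive tower of $X$ --- with, say, two coprime column heights $h,h+1$, both at least $d_n$ --- whose level partition refines $\mathcal P_{n-1}$ and whose base can be partitioned so that the resulting column word (the $A$-string read up a column) has law $\nu_n^{\otimes h}$, resp.\ $\nu_n^{\otimes(h+1)}$, with any two consecutive column words along the $T$-orbit independent and with the whole column-word process independent of $\mathcal P_{n-1}$. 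Then define $f_n$ to be this column-word coding, set $\mathcal P_n=\mathcal P_{n-1}\vee(\text{new level partition})$ and $D_{n+1}=\max(D_n,h+1)$.

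Granting such a tower, the inductive step is verified directly. For the marginal: a point $x$ at height $t$ in a column of height $H\in\{h,h+1\}$ with word $w$ and next column word $w'$ has $(f_n(x),\dots,f_n(T^{d_n-1}x))$ equal to a suffix of $w$ followed by a prefix of $w'$; since $H\ge d_n$, only the single succeeding column is involved, so the $d_n$ positions read are pairwise distinct, each coordinate is $\nu_n$-distributed, and the independence of $w$ from $w'$ together with the product structure inside each word make this $d_n$-tuple exactly $\nu_n^{\otimes d_n}$ --- uniformly in $(t,w,w')$. Integrating over the uniform height and over the column-word law gives that $\{f_n\circ T^m\}_{0\le m<d_n}$ has law $\nu_n^{\otimes d_n}$. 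For independence from earlier rows: by (ii) the process $\{f_k\circ T^m\}_{k<n}$ is $\mathcal P_{n-1}$-measurable whereas $\{f_n\circ T^m\}_{0\le m<d_n}$ is a function of the new column-word process, which was chosen independent of $\mathcal P_{n-1}$; hence the stage-$n$ row is independent of everything constructed earlier. Since within-row independence is built in and row $n$ is independent of $\sigma(f_1,\dots,f_{n-1})$ for every $n$, the full array $\{f_n\circ T^m\}$ then has the same law as $\mathbb X$.

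The substantive part is the strong Alpern tower lemma invoked above: in an \emph{arbitrary} ergodic aperiodic system, producing an \emph{exhaustive} tower (no error set --- which forces coprime heights as in Alpern's theorem and is exactly what allows the conclusion to be an equality of laws rather than an approximation) that refines a prescribed finite partition and carries a base coding whose consecutive column words are independent and whose column-word process is independent of that partition. The main obstacle is carrying out the Rokhlin-to-Alpern surgery while keeping control of a given partition \emph{and} realizing the prescribed column-word marginals together with the pairwise independence exactly; it is essential here that only \emph{finitely many} independences are demanded --- a genuine i.i.d.\ coding with bounded column heights would force positive entropy, whereas finite-order independence is compatible with zero entropy, so the construction must genuinely use that each row of $\mathbb X$ has finitely many entries. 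The bookkeeping in (ii), ensuring the bounded shifts of earlier $f_k$ stay $\mathcal P_{n-1}$-measurable, is a routine but necessary point, handled by letting each $\mathcal P_{n-1}$ already refine the relevant bounded join $\bigvee_{|m|\le D_n}T^m\mathcal Q$ of the earlier level partitions $\mathcal Q$.
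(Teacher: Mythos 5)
Your architecture coincides with the paper's: reduce the triangular array to the single-row statement ``realize $X_1,\dots,X_l$ i.i.d.\ as $\{f\circ T^i\}_{i=0}^{l-1}$, independent of a prescribed finite partition $\xi$'' (the paper's Proposition \ref{prop: realz2}), run an induction over rows with $\xi$ generated by the relevant shifts of the earlier $f_k$, use an exhaustive Alpern castle with two coprime heights at least the row length so that a window of length $d_n$ crosses at most one block boundary, and code blocks by product measures. That part of your plan is correct and is exactly how the paper proceeds.

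The gap is that you have packaged the entire substance of the proof into the unproved ``strong Alpern tower lemma,'' and the precise form you give it hides a circularity. The citable input (\cite{CaMcWi18}) only provides a $\xi$-independent exhaustive $\{2l,2l+1\}$ castle; it says nothing about base codings with independent consecutive column words. If, as you propose, each column carries a single word $W_j$ and you demand $W_j\perp W_{j+1}$ for all $j$, you must define $W_{j+1}$ conditionally on $W_j$, which is itself defined conditionally on $W_{j-1}$, and there is no starting point along the orbit from which to run this conditioning; it is not clear your lemma, as stated, can be proved directly. The paper breaks the circularity by coding each column in \emph{two} blocks of length $\ge l$: first the top halves are assigned unconditionally, via a product refinement of the partition $\zeta_1$ of the top of the castle (itself refined by $\vee_{i=0}^{2l}T^i\xi$ to secure $\xi$-independence); then the bottom halves are assigned via a partition $\zeta$ of the base that refines both the pull-back of the current column's top coding and the forward image $T(\zeta_1'\cap T^{2l-1}B)$, $T(\zeta_1'\cap T^{2l}F)$ of the \emph{previous} column's top coding, so that each bottom-half word can be chosen conditionally independent of both neighbouring top-half words. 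Any window of length $l$ then meets at most two adjacent blocks from an independent pair, which is what equations \eqref{eq: eq1All} and \eqref{eq: eq2} verify. Without this (or an equivalent) two-stage device, your inductive step rests on a lemma that is not known and whose naive proof attempt fails; with it, your argument becomes the paper's.
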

The construction of the functions $f_n$ is by induction on $N\in\mathbb{N}$ so that \\$\left\{f_n\circ T^m\right\}_{1\leq n\leq N,m\in\{0,..,d_n-1\}}$  and $\left\{X_{n,m}\right\}_{1\leq n\leq N,m\in\{1,..,d_n\}}$  have the same distribution. For a fixed $N$ one can consider $\xi$, the finite partition of $X$ according to the value of the vector valued function 
\[
\left(G_N\right)_{n,m}(x):=f_n\circ T^m(x),\ \ 1\leq n\leq N,\ 0\leq m\leq d_n-1.
\]
For this reason the previous Proposition is a corollary of the following proposition. 
\begin{proposition}\label{prop: realz2}
	Let $\left(X,\mathcal{B},m,T\right)$ be an ergodic invertible probability measure preserving transformation and $\xi$ a finite measurable partition of $X$. For every finite set $A$ and $X_1,X_2,..,X_l$	a collection of $A$ valued i.i.d. random variables, there exists $f:X\to A$ such that $\left\{f\circ T^i\right\}_{i=0}^{l-1}$ and $\left\{X_i\right\}_{i=1}^{l}$ are equally distributed and $\left\{f\circ T^i\right\}_{i=0}^{l-1}$ is independent of $\xi$. 
\end{proposition}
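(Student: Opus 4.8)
The plan is to build $f$ via a two‑column Alpern tower and to exploit the nonatomicity of $m$ (which follows from aperiodicity) to make the word $(f,f\circ T,\dots,f\circ T^{l-1})$ \emph{exactly} distributed as $p^{\otimes l}$ and \emph{exactly} independent of $\xi$, where $p$ is the common law of the $X_i$. First I would use Alpern's tower lemma to write $X$ as a disjoint union of two Rokhlin columns of heights $a$ and $b$, chosen coprime with $a\equiv b\equiv 1\pmod l$; for example $a=M_0l+1$, $b=(M_0+1)l+1$ with $M_0\ge 2$, which are coprime since $\gcd(a,b)=\gcd(a,b-a)=\gcd(M_0l+1,l)=1$. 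Denote by $B_1,B_2$ the bases, $h_1=a$, $h_2=b$, so that $X=\bigsqcup_{C\in\{1,2\}}\bigsqcup_{j=0}^{h_C-1}T^jB_C$, and let $\tau$ be the first‑return map of $T$ to $B:=B_1\sqcup B_2$, which preserves $m|_B$.

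Next I would cut column $C$ into blocks: block $0$ is the single level $\{0\}$, and for $1\le k\le n_C:=(h_C-1)/l$ block $k$ is the run of $l$ levels $\{(k-1)l+1,\dots,kl\}$ — this uses $h_C\equiv 1\pmod l$. The point of this choice is that every length‑$l$ window $x,Tx,\dots,T^{l-1}x$ reads $l$ consecutive levels of the castle meeting at most three blocks: a suffix of one block, zero or more whole blocks, and a prefix of another; and in the only delicate case — when the orbit of $x$ leaves column $C$ through its top level and re‑enters at the base point $\tau(b)$ — it reads a suffix of the top block of $C$, then all of block $0$ of the next column $C'$, then a prefix of block $1$ of $C'$ (since $a,b>l$ there is never a second crossing). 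I would then define $f$ by colouring each block independently: for a block of length $r\in\{1,l\}$ in column $C$ pick a partition of $B_C$ into sets $P_v$, $v\in A^r$, with $m(P_v)=p^{\otimes r}(v)\,m(B_C)$, and declare the $i$‑th level of that block (read off the base point) to equal $v_i$ on $P_v$. Because $m$ is nonatomic, all these partitions of the two bases can be chosen with the prescribed masses and, simultaneously, so that they are mutually independent and independent of any prescribed finite partition; concretely I would choose them in the order ``blocks $0$ and $1$ of both columns, then the interior blocks, then the top blocks $n_C$'', making the new partition at each step independent, with its prescribed marginal, of (i) the already‑fixed blocks of its own column, (ii) the finite partition $\widehat\Xi_C$ of $B_C$ recording the $\xi$‑names $\bigl(\xi(T^{-(l-1)}b),\dots,\xi(b),\dots,\xi(T^{h_C-1}b)\bigr)$, and, for the top blocks only, (iii) the $\tau$‑pull‑back to $B_C$ of the $\sigma$‑algebra generated by the already‑fixed blocks $0,1$ and the partitions $\widehat\Xi_1,\widehat\Xi_2$. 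Finiteness of $\xi$ makes all the partitions to be dodged finite, so each step is an elementary splitting.

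Finally I would verify the two conclusions by conditioning on the level sets of the castle. Fix $w\in A^l$ and a $\xi$‑atom $D$. For a level set $T^jB_C$ lying entirely inside a column, the block picture exhibits $(f(x),\dots,f(T^{l-1}x))$ for $x\in T^jB_C$ as a concatenation of independent pieces evaluated at the base point $b$, and these are independent of $\xi(x)$ by (ii); hence $m\bigl(\{(f,\dots,f\circ T^{l-1})=w\}\cap D\cap T^jB_C\bigr)=p^{\otimes l}(w)\,m(T^jB_C\cap D)$. For a level set among the top $l-1$ of a column, splitting $w$ at the boundary and using (iii) together with the fact that $\tau$ preserves $m|_B$ yields the same identity. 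Since the level sets partition $X$, summing over them collapses the right‑hand side to $p^{\otimes l}(w)\,m(D)$, which is exactly equidistribution of $\{f\circ T^i\}_{i=0}^{l-1}$ with $\{X_i\}_{i=1}^{l}$ together with independence from $\xi$. The step I expect to be the real obstacle is this boundary case: a window straddling two columns sees a block of the \emph{other} column pulled through the induced map $\tau$, so the two columns cannot simply be coloured independently of one another; taming this with no error term is precisely what forces the heights $\equiv 1\pmod l$ (so that the bottom block is a single level and the boundary windows still decompose into block‑pieces) and the ``top blocks last'' ordering (so that the suffix in $C$ stays independent of the prefix in $C'$ and of $\xi(x)$).
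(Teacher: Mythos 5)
Your construction is, at bottom, the same as the paper's: both proofs colour the rungs of a two-column Alpern castle by independent base partitions carrying the product law, and both reduce the statement to a factorization identity summed over level sets, with the only delicate point being the windows that exit one column through its top and re-enter the base. The differences are implementational. The paper takes a $\xi$-independent $\{2l,2l+1\}$ castle (so $\xi$ is dodged by the castle itself), colours the top $l$ rungs first via a partition of the top refined by $\vee_{i=0}^{2l}T^i\xi$, and then colours the bottom rungs independently of both the incoming and the outgoing top colourings; you take coprime heights $\equiv 1\pmod l$ (note this needs the general Alpern multi-tower theorem, not just the $\{N,N+1\}$ version), dodge $\xi$ by hand through the base partitions $\widehat\Xi_C$, and colour the top blocks last, independent of the $\tau$-pullback of the bottom data. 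Both orderings are viable, and your block structure (a singleton block at the base plus length-$l$ blocks) does guarantee that every window decomposes into block pieces.

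There is, however, one step that does not hold as stated: the per-level-set identity for the top $l-1$ levels of a column. After you factor out $p^{\otimes s}(w_1)$ using (iii), what remains is the requirement that the blocks $0,1$ colouring of the entered column $C'$ have the product law on the set $\tau(B_C)\cap T^{s}D\cap B_{C'}$. Your conditions only make that colouring independent of $\widehat\Xi_{C'}$; they say nothing about the partition $\bigl\{\tau(B_1)\cap B_{C'},\,\tau(B_2)\cap B_{C'}\bigr\}$ recording which column the point arrived from, and $\tau(B_C)\cap B_{C'}$ is a castle-theoretic set, not a $\xi$-name set. So the conditional law on that set need not be the product law, and the identity can fail for an individual $(j,C)$. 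The repair stays entirely within your scheme: either add $\{\tau(B_1),\tau(B_2)\}$ restricted to $B_{C'}$ to the (finite) list of partitions that blocks $0$ and $1$ must be independent of, or first sum the two boundary contributions at the same depth $s$ from the top over $C\in\{1,2\}$ — since $\tau(B_1)\uplus\tau(B_2)=B$ and $\tau$ preserves $m|_B$, the unwanted set disappears and only $T^{s}D\cap B_{C'}$, which is $\widehat\Xi_{C'}$-measurable (this is exactly where your negative indices in $\widehat\Xi_{C'}$ are used), remains. With either fix the argument goes through.
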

The proof makes use of Alpern-Rokhlin castles. Given $N\in \mathbb{N}$, a $\{N,N+1\}$ Alpern-Rokhlin \textit{castle} for $\left(X,\mathcal{B},m,T\right)$ is given by two measurable sets $B_N,B_{N+1}$ such that
\begin{itemize}
	\item $\left\{T^jB_L:\ L\in\{N,N+1\},\ 0\leq j<L\right\}$ is a partition of $(X,\mathcal{B},m)$ to pairwise disjoint sets..
	\item We call $B_N,B_{N+1}$ the \textit{base elements} of the castle and the atoms in the corresponding partition are referred to as \textit{rungs}. We call $T^NB_n\uplus T^{N+1}B_{N+1}$ the \textit{top of the castle}. 
 \end{itemize}

Given $\xi$,  a finite partition of $X$, a $\{N,N+1\}$ castle is $\xi$ \textit{independent} if every rung in the castle is independent of $\xi$. All equalities of sets mean equality modulo null sets.  
\begin{proof}
	By \cite{CaMcWi18}[Corollary 1], there exists a $\xi$-independent $\{2l,2l+1\}$ castle with bases $B=B_{2l}$ and $F=B_{2l+1}$.  Note that as $T$ is invertible then,
	\[
	T\left(T^{2l-1}B\uplus T^{2l}F\right) = B\uplus F.
	\]
	\\ Let $\zeta_1$ be the partition of the top of the tower $\{T^{2l-1}B,T^{2l}F\}$ which is its refinement according to $\vee_{i=0}^{2l} T^i\xi$. First we define $f: \cup\left\{T^{-j}C:\ C\in\ \zeta_1,\,0\leq  j< l\right\}\to A$ as follows. Partition each  $C\in \zeta_1$ to $A^l$ elements $\left\{C_{\mathbf{a}}:\  \mathbf{a}\in A^l \right\}$ such that for every $a=\left(a_i\right)_{i=0}^{l-1}\in A^l$,
	\[
	m\left(C_{\mathbf{a}}\right)=m(C)\prod_{i=0}^{l-1}\mathbb{P}\left(X_{i+1}=a_i\right).
	\] 
	and set $f(T^{-l+1+i}x)=a_i$, $i=0,\dots,l-1$, if there exists $\mathbf{a}\in A^l$ and $C\in\zeta_1$ such that $x\in C_{\mathbf{a}}$.\\
	
	It remains to define $f$ in the bottom rungs of the tower. 
	By $\zeta_1'$ we denote the refinement of $\zeta_1$ by the sets $C_{\mathbf{a}}$. By $\zeta$ we denote the joint partition of the base of the castle
	by $T^{-2l+1}(\zeta_1' \cap T^{2l-1}B)$, $T(\zeta_1' \cap T^{2l-1}B)$, $T^{-2l}(\zeta_1' \cap T^{2l}F)$, $T(\zeta_1' \cap T^{2l}F)$.
	
	For every $C\in \zeta$ we do as follows. If $C\subset B$ then we partition $C$ to $A^l$ elements $\left\{C_{\mathbf{a}}:\  \mathbf{a}\in A^l \right\}$ such that for every $\mathbf{a}=\left(a_i\right)_{i=0}^{l-1}\in A^l$,
	\[
	m\left(C_{\mathbf{a}}\right)=m(C)\prod_{i=0}^{l-1}\mathbb{P}\left(X_{i+1}=a_i\right)
	\]
	and set $f(T^{i}x)=a_i$ if there exists $\mathbf{a}\in A^l$ and $C\in\zeta$ such that $x\in C_{\mathbf{a}}$.
	
	If $C\subset \zeta\cap F$ we do the same with ${l+1}$ replacing $l$. \\
	
	For any $C\in \zeta_1$ we thus have 
	\[
	m\left( (T^{-l+1}C)\cap\ \left(\cap_{i=0}^{l-1}\left[f\circ T^i=a_{i+1}\right]\right)\right)=m(T^{-l+1}C)\prod_{i=1}^l\mathbb{P}\left(X_i=a_i\right),
	\]
	for $C\in \zeta\cap B$ we have
	\[
	m\left( C\cap\ \left(\cap_{i=0}^{l-1}\left[f\circ T^i=a_{i+1}\right]\right)\right)=m(C)\prod_{i=1}^l\mathbb{P}\left(X_i=a_i\right),
	\]
	and for $C\in \zeta\cap F$ we have
	\[
	m\left( C\cap\ \left(\cap_{i=0}^{l}\left[f\circ T^i=a_{i+1}\right]\right)\right)=m(C)\prod_{i=1}^{l+1}\mathbb{P}\left(X_i=a_i\right).
	\]
	Moreover, for $C\in \zeta_1\cap T^{2l-1}B$ 
	the sets $T^{-2l+1}C_{\mathbf{a}}$ are independent of $(f\circ T^i)_{i=0}^{l-1}$ 
	(conditionally on $T^{-2l+1}C$)  hence
	\begin{equation}\label{eq: eq1B}
	m\left( (T^{-2l+1}C)\cap\ \left(\cap_{i=0}^{2l-1}\left[f\circ T^i=a_{i+1}\right]\right)\right)=m(C)\prod_{i=1}^{2l}\mathbb{P}\left(X_i=a_i\right)
	\end{equation}
	and for  $C\in \zeta_1$ with $T^{-2l}C \subset F$ we have
	\begin{equation}\label{eq: eq1F}
	m\left( (T^{-2l}C)\cap\ \left(\cap_{i=0}^{2l}\left[f\circ T^i=a_{i+1}\right]\right)\right)=m(C)\prod_{i=1}^{2l+1}\mathbb{P}\left(X_i=a_i\right).
	\end{equation}
	
	Let $C\in  T^{-2l+1}(\zeta_1\cap T^{2l-1}B)$ or $C\in  T^{-2l}(\zeta_1\cap T^{2l}F)$, $0\leq k\leq l$. We have
	\[ 
	T^kC \cap \left(\cap_{i=0}^{2l-1-k}\left[f\circ T^i=a_{i+k+1}\right] \right) =
	T^k\left( C\cap \left(\cap_{i=k}^{2l-1}\left[f\circ T^i=a_{i+1}\right] \right) \right);
	\] 
	by (\ref{eq: eq1B}) and (\ref{eq: eq1F}) we get
	\begin{equation}\label{eq: eq1All}
	m\left( (T^{k}C)\cap\ \left(\cap_{i=0}^{2l-1-k}\left[f\circ T^i=a_{i+k+1}\right]\right)\right)=
	m(T^kC) \prod_{i=k+1}^{2l}\mathbb{P}\left(X_i=a_{i}\right).
	\end{equation}
	
	Let us show a similar equation for $T^{-l+1}C$, $C\in \zeta_1$. 
	
	We have
	\[
	T^l \left( ( T^{-l+1}C_{\mathbf{a}}) \cap (T^{-l}B)) \cap \left(\cap_{i=l}^{2l-1}\left[f\circ T^i=a_{i+1}\right] \right)\right) = 
	(B\cap TC_{\mathbf{a}}) \cap\ \left(\cap_{i=0}^{l-1}\left[f\circ T^i=a_{i+l+1}\right]\right)
	\]    
	and the same equality holds for $F$, hence
	\begin{align*}
	&m\left(  ( T^{-l+1}C_{\mathbf{a}}) \cap (T^{-l}B)) \cap \left(\cap_{i=l}^{2l-1}\left[f\circ T^i=a_{i+1}\right] \right)\right) = \\
	&\,\,\,\,\,\,\,\,\,\,\,\,\,\,\,\,\,\,\,\,\,\,\,\,\,\,\,\,\,\,\,\,\,\,\,\,\,\,\,\,\,\,\,\,\,\,\,\,\,\,\,\,\,\,\,\,\,\,\,\,\,\,\,\,\,\,\,\,\,\,\,\, 
	m\left(  (B\cap TC_{\mathbf{a}}) \cap\ \left(\cap_{i=0}^{l-1}\left[f\circ T^i=a_{i+l+1}\right]\right)\right)
	\end{align*}    
	and
	\begin{align*}
	& m\left(  ( T^{-l+1}C_{\mathbf{a}}) \cap (T^{-l}F)) \cap \left(\cap_{i=l}^{2l-1}\left[f\circ T^i=a_{i+1}\right] \right)\right) = \\
	&\,\,\,\,\,\,\,\,\,\,\,\,\,\,\,\,\,\,\,\,\,\,\,\,\,\,\,\,\,\,\,\,\,\,\,\,\,\,\,\,\,\,\,\,\,\,\,\,\,\,\,\,\,\,\,\,\,\,\,\,\,\,\,\,\,\,\,\,\,\,\,\, 
	m\left(  (F\cap TC_{\mathbf{a}}) \cap\ \left(\cap_{i=0}^{l-1}\left[f\circ T^i=a_{i+l+1}\right]\right)\right).
	\end{align*}  
	Therefore, 
	\[ m\left(  ( T^{-l+1}C_{\mathbf{a}})  \cap \left(\cap_{i=l}^{2l-1}\left[f\circ T^i=a_{i+1}\right] \right)\right) =
	m\left(  ( TC_{\mathbf{a}}) \cap\ \left(\cap_{i=0}^{l-1}\left[f\circ T^i=a_{i+l+1}\right]\right)\right)
	\]    
	and by independence of $\zeta$ and $f,\dots,f\circ T^{l-1}$ on $B\cup F$ we deduce 
	\begin{align*} 
	m\left(  ( T^{-l+1}C_{\mathbf{a}})  \cap \left(\cap_{i=l}^{2l-1}\left[f\circ T^i=a_{i+1}\right] \right)\right) &=
	m( TC_{\mathbf{a}})m\left( (B\cup F)\cap\left(\cap_{i=0}^{l-1}\left[f\circ T^i=a_{i+l+1}\right]\right) \right)\\ 
	&=m(C_{\mathbf{a}})\prod_{i=1}^{l}\mathbb{P}\left(X_i=a_{i+l+1}\right)\\
	&=m\left(C\right)\prod_{i=1}^{2l}\mathbb{P}\left(X_i=a_{i}\right).
	\end{align*}
	Recall that if $C\in \zeta_1$ and $\mathbf{a} =\left(a_i\right)_{i=1}^{l}\in A^l$ then
	\[
	T^{-l+1}C_{\mathbf{a}} = T^{-l+1}C \cap\ \left(\cap_{i=0}^{l-1}\left[f\circ T^i=a_{i+1}\right]\right).
	\]
	Therefore,
	\begin{align*}
	m\left(  (T^{-l+1}C)\cap\ \left(\cap_{i=0}^{2l-1}\left[f\circ T^i=a_{i+1}\right]\right)\right)&= 
	m\left(  ( T^{-l+1}C_{\mathbf{a}})  \cap \left(\cap_{i=l}^{2l-1}\left[f\circ T^i=a_{i+1}\right] \right)\right)\\
	&=m\left(C\right)\prod_{i=1}^{2l}\mathbb{P}\left(X_i=a_{i}\right),
	\end{align*}
	hence
	\[
	m\left(  (T^{-l+1}C)\cap\ \left(\cap_{i=0}^{2l-1}\left[f\circ T^i=a_{i+1}\right]\right)\right)=
	m((T^{-l+1}C))  \prod_{i=1}^{2l}\mathbb{P}\left(X_i=a_{i}\right).
	\]
	For $0\leq k\leq l-1$ we thus have
	\begin{equation}\label{eq: eq2}
	m\left(  (T^{-l+k+1}C)\cap\ \left(\cap_{i=0}^{2l-1}\left[f\circ T^i=a_{i+1}\right]\right)\right)=
	m((T^{-l+1}C))  \prod_{i=1}^{2l}\mathbb{P}\left(X_i=a_{i}\right).
	\end{equation}
	\ 
	We show that $f$ satisfies the conclusion of the proposition. Let $D\in \xi$ and $\mathbf{a}\in A^l$. We claim that
	\begin{equation}\label{eq: main in prop2}
	m\left(D\cap\ \left(\cap_{i=0}^{l-1}\left[f\circ T^i=a_{i+1}\right]\right)\right)=m(D)\prod_{i=1}^l\mathbb{P}\left(X_i=a_i\right).
	\end{equation}
	Let $Y$ be a rung of the castle. Since the castle is $\xi$-independent, $m(D\cap Y)=m(D)m(Y)$ and $D\cap Y$ is a finite union of sets of the form 
	$T^{-r}C$ with $C\in\zeta_1$ and $r$ a fixed integer, $0\leq r\leq 2l-1, 2l$.
	(depending on the tower of the castle to which $Y$ belongs). Denote $C'= T^{-r}C$.
	
	From (\ref{eq: eq1All}) and (\ref{eq: eq2}) it follows that 
	\begin{equation}\label{eq: eq3}
	m\left(C'\cap \ \left(\cap_{i=0}^{l-1}\left[f\circ T^i=a_{i+1}\right]\right)\right)=m(C)\prod_{i=1}^l\mathbb{P}\left(X_i=a_i\right).
	\end{equation}
	Because $\zeta_1$ is finer than $(T^{2l-1}B \cup T^{2l}F) \cap \vee_{i=0}^{2l} T^i\xi$, $D\cap Y$ is a union of sets $C'$. $D$ is just a disjoint union 
	of the sets $C'$ over all rungs $Y$. Summing equations (\ref{eq: eq3}) for all such $C'$ we see that equation (\ref{eq: main in prop2}) holds.

\end{proof}
\section{Definition of the function and proof of the CLT}\label{sec: CLT}
Let $\left(X,\mathcal{B},\mu,T\right)$ be an ergodic invertible probability measure preserving transformation and $U:L_2(X,\mu)\to L_2(X,\mu)$ is its corresponding Koopman operator. In this susbsection we construct the function $f$  for which the local limit theorem holds. \\
For $k\in\mathbb{N}$ we define 
\[
p_k:=\begin{cases}
2^k, & k\ \text{even} \\
2^{k-1}+1, & k\ \text{odd}.
\end{cases}
\]
and
\[
d_k:=2^{k^2},\ \ \text{and}\ \ \alpha_k:=\frac{1}{p_k\sqrt{k\log k}}.
\]
By a repeated inductive iteration of Proposition \ref{prop: realz2}, there exists a sequence of functions $\bar{f}_k:X\to \left\{-1,0,1\right\}$ such that:
\begin{itemize}
	\item[(a)] For every $k\in\mathbb{N}$, $\left\{\bar{f}_k\circ T^j\right\}_{j=0}^{2d_k+p_k}$ is an i.i.d. sequence, $\{-1,0,1\}$ valued and 
	\[
	\mu\left(\bar{f_k}=1\right)=\mu\left(\bar{f_k}=-1\right)=\frac{\alpha_k^2}{2}.
\]
\item[(b)] For every $k\geq 2$, the finite sequence $\left\{\bar{f}_k\circ T^j\right\}_{j=0}^{2d_k+p_k}$ is independent of\\ $\mathcal{A}_k:=\left\{\bar{f}_l\circ T^j:\ 1\leq l<k,\ 0\leq j\leq 2d_k+p_k \right\}$. 
\end{itemize}  
To explain how we apply the proposition, the values of the functions $g\in\mathcal{A}_k$ induce a partition $\xi$ of $X$ into $3^{2d_k+p_k}$ elements. Write $X_1,X_2,X_3,...,X_{2d_k+p_k}$ for an i.i.d. sequence such that $X_1$ is $\{-1,0,1\}$ distributed with $\mathbb{E}\left(X_1\right)=0$ and $\mathbb{E}\left(X_1^2\right)=\alpha_k^2$ and apply Proposition \ref{prop: realz2}. In this definition the sequence of functions 
\[
\left\{\bar{f}_k\circ T^j: k\in\mathbb{N},\ 0\leq j\leq 2d_{k}+p_k \right\}
\]
is a triangular array. However, Proposition \ref{prop: realz2} enables us to get property (b) which is more than a realization of this triangular array. This step is crucial in what follows.\\
Let us define $f:=\sum_{k=1}^\infty f_k$ where for each $k\in\mathbb{N}$,
\[
f_k:=\sum_{i=0}^{p_k-1}U^i\bar{f}_k-U^{d_k}\left(\sum_{i=0}^{p_k-1}U^i\bar{f}_k\right). 
\]
It is worth to note that each of the function $f_k$ is a coboundary with transfer function
\[
g_k:=\sum_{j=0}^{d_k-1}\sum_{i=0}^{p_k-1}U^{i+j}\bar{f}_k,\ \ \ f_k=g_k-Ug_k.
\]
\begin{proposition}
$f\in L^2(X,\mu)$.
\end{proposition}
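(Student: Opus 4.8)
The plan is to bound $\|f\|_2$ by $\sum_{k=1}^\infty \|f_k\|_2$ and show that this series converges. So the first step is to estimate $\|f_k\|_2$ for a single $k$. Since $f_k = \sum_{i=0}^{p_k-1}U^i\bar f_k - U^{d_k}\bigl(\sum_{i=0}^{p_k-1}U^i\bar f_k\bigr)$, the triangle inequality gives $\|f_k\|_2 \le 2\bigl\|\sum_{i=0}^{p_k-1}U^i\bar f_k\bigr\|_2$ (using that $U$ is an isometry). Now the key point is property (a): the finite sequence $\{\bar f_k\circ T^j\}_{j=0}^{2d_k+p_k}$ is i.i.d. with mean $0$ and variance $\alpha_k^2$. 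In particular $\bar f_k, U\bar f_k,\dots,U^{p_k-1}\bar f_k$ are orthogonal in $L^2$, each of norm $\alpha_k$, so $\bigl\|\sum_{i=0}^{p_k-1}U^i\bar f_k\bigr\|_2^2 = p_k\alpha_k^2$. Hence $\|f_k\|_2 \le 2\sqrt{p_k}\,\alpha_k$.

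Next I substitute the definition $\alpha_k = \frac{1}{p_k\sqrt{k\log k}}$, which yields $\|f_k\|_2 \le \frac{2\sqrt{p_k}}{p_k\sqrt{k\log k}} = \frac{2}{\sqrt{p_k}\sqrt{k\log k}} \le \frac{2}{\sqrt{k\log k}}$ for $k\ge 2$ (since $p_k\ge 1$; in fact $p_k\ge 2^{k-1}$, so the bound is far better, but $\frac{2}{\sqrt{k\log k}}$ already suffices). Summing, $\sum_{k=2}^\infty \|f_k\|_2 \le \sum_{k=2}^\infty \frac{2}{\sqrt{k\log k}}$ — wait, this particular series actually diverges, so I must use the stronger bound coming from $p_k$. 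With $p_k\ge 2^{k-1}$ we get $\|f_k\|_2 \le \frac{2}{\sqrt{2^{k-1}}\sqrt{k\log k}} = O(2^{-k/2})$, and $\sum_k 2^{-k/2} < \infty$. Therefore $\sum_{k=1}^\infty \|f_k\|_2 < \infty$, the series $\sum_k f_k$ converges absolutely in the Banach space $L^2(X,\mu)$, and its sum $f$ lies in $L^2(X,\mu)$.

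The only genuinely delicate point is making sure the orthogonality computation $\bigl\|\sum_{i=0}^{p_k-1}U^i\bar f_k\bigr\|_2^2 = p_k\alpha_k^2$ is legitimate: this requires that the $p_k$ consecutive terms $\bar f_k,\dots,U^{p_k-1}\bar f_k$ are indeed pairwise orthogonal, which is exactly guaranteed by property (a) since $p_k \le 2d_k+p_k$ so all the needed indices fall within the i.i.d. block. Everything else is a routine application of the triangle inequality and the completeness of $L^2$; I would present the argument essentially in the three steps above, noting at the end that $f\in L_0^2$ as well since each $f_k$ is a coboundary (hence mean zero) and mean-zero is preserved under $L^2$-limits.
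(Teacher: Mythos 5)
Your proof is correct. The main difference from the paper's argument is in how you pass from the individual bounds on $\left\|f_k\right\|_2$ to the convergence of $\sum_k f_k$: you use the triangle inequality and absolute convergence in the Banach space $L^2$, which requires $\sum_k \left\|f_k\right\|_2<\infty$ and hence forces you to exploit the factor $p_k^{-1/2}\gtrsim 2^{-k/2}$ hidden in $\alpha_k$ (as you correctly noticed mid-argument when the weaker bound $2/\sqrt{k\log k}$ turned out not to be summable). The paper instead groups each $f_k$ as $\sum_{i=0}^{p_k-1}V_{k,i}$ with $V_{k,i}=U^i\bar f_k-U^{d_k+i}\bar f_k$ i.i.d., computes the exact value $\left\|f_k\right\|_2^2=2p_k\alpha_k^2\le 2/p_k$, and then invokes the independence (hence orthogonality) of the blocks $f_1,f_2,\dots$ guaranteed by property (b) to get $\left\|f\right\|_2^2=\sum_k\left\|f_k\right\|_2^2$; this only needs square-summability of the norms. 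Your route is marginally more elementary in that it never uses the independence across different $k$, at the cost of needing the stronger $\ell^1$ bound, which the construction happily provides; the paper's route yields the exact second moment of $f$, which is in the spirit of the variance computations used later. One cosmetic caveat (shared with the paper): $\alpha_1$ involves $\log 1=0$, so the sum should implicitly start at $k=2$; this does not affect the argument.
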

\begin{proof}
Fix $k\in\mathbb{N}$ and write $V_{k,i}:=U^i\bar{f}_k-U^{d_k+i}\bar{f}_k$ where $i\in\{0,1,...,p_k-1\}$. This is a sequence of i.i.d. random variables with $\int_XV_{k,i}d\mu=0$ and $\left\|V_{k,i}\right\|_2^2=2\alpha_k^2$. Therefore 
\begin{align*}
\left\|f_k\right\|_2^2&= \int_X \left(\sum_{i=0}^{p_k-1}V_{k,i}\right)^2dm\\
&=\sum_{i=0}^{p_k-1}\int_X \left(V_{k,i}\right)^2dm=2\alpha_k^2p_k\leq \frac{2}{p_k}.
\end{align*}
By condition (b) in the definition of the $\bar{f}_k$'s, the functions $f_1,f_2,..$ are independent. As they are also centered,
\[
\left\|f\right\|_2^2=\sum_{k=1}^\infty \left\|f_k\right\|_2^2\leq \sum_{k=1}^\infty \frac{2}{p_k}<\infty.
\]
We conclude that $f$ is well defined. 
\end{proof}
\begin{theorem}\label{thm: main 2}
The function $f:X\to\mathbb{R}$ satisfies the local limit theorem with $\sigma^2:=2(\ln 2)^2$. That is 
\[
\sup_{x\in\mathbb{Z}}\left|\sqrt{n}\mu\left(S_n(f)=x\right)-\frac{e^{-x^2/\left(2n\sigma^2\right)}}{\sqrt{2\pi\sigma ^2}}\right|\xrightarrow[n\to\infty]{}0
\]
\end{theorem}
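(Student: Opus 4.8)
The plan is to run a characteristic‑function argument, using crucially that the blocks $\{f_k\}_{k\ge 1}$ are independent and centred, so that $S_n(f)=\sum_k S_n(f_k)$ is a sum of independent integer valued random variables and $\phi_n(t):=\int_X e^{\ri tS_n(f)}d\mu=\prod_{k}\phi_{n,k}(t)$, $\phi_{n,k}(t):=\int_X e^{\ri tS_n(f_k)}d\mu$. First one checks that $f$ really is $\mathbb Z$‑valued: $\{f_k\neq0\}$ forces $\bar f_k\circ T^i\neq0$ for one of $2p_k$ indices, so $\mu(f_k\neq0)\le 2p_k\alpha_k^2=2/(p_kk\log k)$, which is summable, and by Borel--Cantelli only finitely many $f_k$ are nonzero a.e. Next one computes the variance. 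Since $f_k=h_k-U^{d_k}h_k$ with $h_k=\sum_{i<p_k}U^i\bar f_k$, for every $k$ with $p_k\le n\le d_k-p_k$ one has $S_n(f_k)=A_{n,k}-U^{d_k}A_{n,k}$, where $A_{n,k}:=\sum_{j<n}U^jh_k=\sum_m e^{(n,k)}_m U^m\bar f_k$ is a weighted sum of i.i.d.\ $\{-1,0,1\}$'s whose weights $e^{(n,k)}_m$ form a trapezoid of height $p_k$ with a plateau of length $\asymp n$, and the two summands are independent because $d_k>n+p_k$. Hence $\|S_n(f_k)\|_2^2=2\alpha_k^2\sum_m(e^{(n,k)}_m)^2=\tfrac{2n}{k\log k}(1+o(1))$ uniformly over these $k$, while the ranges $d_k<n$ and $p_k>n$ contribute $o(n)$ altogether; summing and using the elementary asymptotic $\sum_{\sqrt{\log_2 n}\le k\le\log_2 n}\frac1{k\log k}=(\ln 2)^2+o(1)$ gives $\|S_n(f)\|_2^2\sim 2(\ln 2)^2 n=\sigma^2 n$.

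The central limit theorem $\phi_n(s/\sqrt n)\to e^{-\sigma^2 s^2/2}$ then follows from the Lindeberg theorem for row–independent arrays applied to the summands $S_n(f_k)$. I would split the scales $k$ into $k<\sqrt{\log_2 n}$, $\sqrt{\log_2 n}\le k\le\tfrac12\log_2 n$, and $k>\tfrac12\log_2 n$. For the first and last ranges the crude bound $\int_{\{|S_n(f_k)|>\epsilon\sqrt n\}}S_n(f_k)^2\,d\mu\le\|S_n(f_k)\|_2^2$ already sums to $o(n)$. For the middle range a Bernstein inequality for $A_{n,k}$ (variance $\asymp n/(k\log k)$, summands bounded by $p_k$) yields $\mu(|A_{n,k}|>\epsilon\sqrt n)\le 2e^{-c\epsilon^2 k\log k}$ as long as $p_k\le \sqrt n/(\epsilon k\log k)$, so the truncated second moment is $\lesssim\frac n{k\log k}e^{-c\epsilon^2 k\log k}$, and the thin transitional band of larger $p_k$ is again absorbed by the trivial bound (its contribution is $o(n)$ because $\sum_{k}\frac1{k\log k}$ over a band of $\log$–$\log$–length $o(1)$ is $o(1)$).

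To obtain the local limit theorem, write, using $S_n(f)\in\mathbb Z$, $\mu(S_n(f)=x)=\frac1{2\pi}\int_{-\pi}^{\pi}e^{-\ri tx}\phi_n(t)\,dt$ and $\tfrac{e^{-x^2/(2n\sigma^2)}}{\sqrt{2\pi n\sigma^2}}=\frac1{2\pi}\int_{\RR}e^{-\ri tx}e^{-n\sigma^2 t^2/2}dt$; substituting $t=s/\sqrt n$ and taking $\sup_x$ reduces the theorem to $\int_{-\pi\sqrt n}^{\pi\sqrt n}|\phi_n(s/\sqrt n)-e^{-\sigma^2 s^2/2}|\,ds\to 0$ (the tail $\int_{|s|>\pi\sqrt n}e^{-\sigma^2 s^2/2}ds$ being negligible). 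I would split the range of $t=s/\sqrt n$ into $|t|\le A/\sqrt n$, $A/\sqrt n\le|t|\le\theta_n$, and $\theta_n\le|t|\le\pi$, where $\theta_n:=2^{-(\log_2 n)^{3/4}}$ (so $\theta_n\to0$ but $n\theta_n^2=n^{1-o(1)}$). On $|t|\le A/\sqrt n$, for each fixed $A$ the CLT gives $\int_{|s|\le A}|\phi_n(s/\sqrt n)-e^{-\sigma^2 s^2/2}|\,ds\to 0$ by uniform convergence of characteristic functions on compacts. For the other two ranges the mechanism is that for $k$ with $p_k\le n\le d_k-p_k$ one has $\phi_{n,k}(t)=|\int_X e^{\ri tA_{n,k}}d\mu|^2\in[0,1]$ and, discarding all weights except the plateau,
\[
0\le\phi_{n,k}(t)\le\exp\bigl(-c\,(n-p_k)\,\alpha_k^2\,(1-\cos(p_kt))\bigr),
\]
so that, using $|\phi_{n,k}(t)|\le 1$ for all other $k$, $|\phi_n(t)|\le\exp\bigl(-c\sum_{k\ \mathrm{active}}(n-p_k)\alpha_k^2(1-\cos(p_kt))\bigr)$. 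When $A/\sqrt n\le|t|\le\theta_n$, every active $k$ with $p_k\le\pi/|t|$ satisfies $1-\cos(p_kt)\ge\tfrac2{\pi^2}(p_kt)^2$, whence the exponent is at least $c't^2\sum_{\sqrt{\log_2 n}\le k\le\log_2(\pi/|t|)}\frac{n-p_k}{k\log k}\ge c_1 nt^2$, the harmonic sum staying above a positive absolute constant throughout this range; thus $|\phi_n(t)|\le e^{-c_1 nt^2}$ and $\sqrt n\int_{A/\sqrt n}^{\theta_n}e^{-c_1 nt^2}dt\le\int_A^{\infty}e^{-c_1 u^2}du\to 0$ as $A\to\infty$. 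When $\theta_n\le|t|\le\pi$ I would use the arithmetic of $\{p_k\}$: if $k$ is even then $p_{k+1}=p_k+1$, so $\operatorname{dist}(p_kt,2\pi\ZZ)+\operatorname{dist}(p_{k+1}t,2\pi\ZZ)\ge\operatorname{dist}(t,2\pi\ZZ)\ge\theta_n$; choosing such a pair with $k\asymp\sqrt{\log_2 n}$ (so that the scale is active, $p_k=n^{o(1)}$ and $\alpha_k^2 n=n^{1-o(1)}$), one of $\kappa\in\{k,k+1\}$ has $1-\cos(p_\kappa t)\gtrsim\theta_n^2$, so $|\phi_n(t)|\le\phi_{n,\kappa}(t)\le\exp(-c\,n\,\alpha_\kappa^2\,\theta_n^2)\le e^{-n^{3/4}}$ for $n$ large; hence $\sqrt n\int_{\theta_n\le|t|\le\pi}|\phi_n(t)|\,dt\le 4\pi\sqrt n\,e^{-n^{3/4}}\to 0$. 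Adding the three contributions finishes the proof.

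The main obstacle, and the reason for the precise calibration $d_k=2^{k^2}$, $p_k\asymp 2^k$, $\alpha_k=(p_k\sqrt{k\log k})^{-1}$ together with the device $p_{k+1}=p_k+1$ for even $k$, is the whole middle regime $A/\sqrt n\le|t|\le\pi$: for every such frequency one must exhibit an \emph{active} scale $k$ — one with $p_k\le n\le d_k$, so that $S_n(f_k)$ really decomposes as a difference of two i.i.d.\ trapezoidal sums with explicit characteristic function — whose plateau frequency $p_k$ is calibrated to $1/|t|$ and at which $1-\cos(p_kt)$ is quantitatively bounded below, \emph{without} sacrificing the full strength $\alpha_k^2 n=n^{1-o(1)}$; near the high‑frequency end this forces the use of the "difference‑one" pair, which is moreover exactly what makes the span of $f$ equal to all of $\ZZ$ (rather than a proper sublattice) and supplies the uniform decay of $\phi_n$ away from the origin. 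A secondary, softer point is the Lindeberg verification, since the individual blocks $S_n(f_k)$ are "all‑or‑nothing" (zero with overwhelming probability, large otherwise) so that truncation at level $\epsilon\sqrt n$ barely reduces their second moment; this is dealt with by Bernstein bounds for the moderate scales and by $o(n)$‑summability of $\|S_n(f_k)\|_2^2$ for the extreme ones.
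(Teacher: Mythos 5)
Your proposal is correct in its essentials and reaches the theorem, but by a genuinely different route, so let me compare. The paper first peels $S_n(f)$ down to a main term $\mathsf{U}_n$ built from the paired scales $(p_k,p_{k+1})=(2^k,2^k+1)$, proves the LLT for $\mathsf{U}_n$ alone, and transfers it back to $S_n(f)$ via a separate smoothing statement (Proposition \ref{prop: essential part}) for adding independent integer noise of variance $O(n/\sqrt{\log n})$; you instead keep the full product $\phi_n=\prod_k\phi_{n,k}$ and bound the inactive factors by $1$, which dispenses with that transfer step entirely. In the characteristic-function estimates the paper covers $n^{-1/4}\le|t|\le\pi$ in one stroke by observing that the paired variable $p_k\bar f_k\circ T^j+p_{k+1}\bar f_{k+1}\circ T^j$ has an atom at $\pm1$ of mass $(\alpha_k\alpha_{k+1})^2/4$, handles $|t|\le n^{-1/4}$ by a fourth-moment Taylor expansion over the scales with $p_k\le\sqrt[4]{n}/3$, and concludes by dominated convergence against a single integrable majorant; you use the classical three-range split with a moving threshold $\theta_n$, getting $|\phi_n(t)|\le e^{-c_1nt^2}$ in the middle range from $1-\cos(p_kt)\gtrsim(p_kt)^2$ summed over the scales $\sqrt{\log n}<k\le\log(\pi/|t|)$ (whose $\log\log$-length stays bounded below precisely because $|t|\le\theta_n$), and treating the outer range by subadditivity of $\mathrm{dist}(\cdot,2\pi\ZZ)$ applied to $p_kt$ and $p_{k+1}t$ — the same arithmetic exploitation of $p_{k+1}-p_k=1$ as the paper's, in a different guise. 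Your Lindeberg verification by Bernstein bounds also differs from the paper's direct truncation argument but is sound, and your Borel--Cantelli check that $f$ is $\ZZ$-valued is a worthwhile point the paper leaves implicit. One imprecision worth fixing: $\|S_n(f_k)\|_2^2=\tfrac{2n}{k\log k}(1+o(1))$ is \emph{not} uniform over all active $k$ (when $p_k$ is comparable to $n$ the trapezoidal weights lose a constant factor relative to $np_k^2$), but since the total discrepancy is $O\bigl(\sum_k p_k^3\alpha_k^2\bigr)=O(n/\log n)$ — this is exactly what the paper's $A_k,C_k$ corrections account for — the variance asymptotic and everything downstream survive.
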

\subsubsection*{Discussion on the steps in the proof of Theorem \ref{thm: main 2}}

The beginning of the proof is done by an argument similar to the one in Terrence Tao's blogpost on local limit theorems. The first step, which is done in the next subsection, is to prove the central limit theorem. This is done by calculating the second moments and verifying the Lindeberg condition. The choice of $d_k$ instead of an exponential sequence as in \cite{Vol98} is used in this step.\\
In the course of the proof of the CLT, $S_n(f)$ is decomposed into a sum of several independent random variables and we identify the main term, which we will call in this discussion $Y_n$. By Proposition \ref{prop: essential part}, the local limit theorem for $S_n(f)$ is equivalent to the local limit theorem for the main term $Y_n$. \\
In Section \ref{sec: LLT}, we show the local limit theorem for $Y_n$. There we use Fourier inversion and the CLT to reduce the local limit theorem to a question about uniform integrability of certain functions. In this step the choice of $p_k$ will help with a strong aperiodicity type statement which appears in the proof of Lemma \ref{lem: dom away from 0}. One problem we encounter, which is not present in the proof of classical local limit theorems, is that it seems a difficult problem to control the Fourier expansion of $Y_n$ around zero for an interval of fixed size (or a scaled interval of length constant times $\sqrt{n}$). We overcome this problem by obtaining a sharp enough aperiodicity bound which reduces the estimate around zero to an interval of length constant times $\sqrt[4]{n}$ as in Lemma \ref{lem: dom near 0}.  

\subsection{Proof of the CLT}

We start by presenting $S_n(f)$ as a sum of three terms depending on the scale of $k$ with respect to $n$. That is 
\[
S_n(f)=Z_{Sm}(n)+\hat{Y}(n)+Z_{La}(n).
\]
where
\begin{align*}
Z_{Sm}(n)&:= \sum_{k:\ d_k\leq n}S_n\left(f_k\right)\\
\hat{Y}(n)&:=\sum_{k:\ p_k< n<d_k}S_n\left(f_k\right)\\
Z_{La}(n)&:=\sum_{k:\ n\leq p_k}S_n\left(f_k\right)
\end{align*}
\begin{lemma}\label{lem: 1st for CLT}
For every $n\in\mathbb{N}$ the random variables $Z_{Sm}(n),\hat{Y}(n),Z_{La}(n)$ are independent and
\begin{itemize}\label{lem: 1st Lem in CLT}
	\item[(a)] $\left\|Z_{Sm}(n)+Z_{La}(n)\right\|_2^2=O\left(\frac{n}{\sqrt{\log n}}\right)$.
	\item[(b)]  $\frac{1}{\sqrt{n}}\hat{Y}(n)-\frac{1}{\sqrt{n}}S_n(f)\xrightarrow[n\to\infty]{}0$ in $L^2(X,\mu)$.
\end{itemize}
\end{lemma}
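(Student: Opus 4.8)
The plan is to get part (b) for free from part (a): since $S_n(f)=Z_{Sm}(n)+\hat{Y}(n)+Z_{La}(n)$ we have $S_n(f)-\hat{Y}(n)=Z_{Sm}(n)+Z_{La}(n)$, so by (a)
\[
\left\|\tfrac{1}{\sqrt{n}}\hat{Y}(n)-\tfrac{1}{\sqrt{n}}S_n(f)\right\|_2^2=\tfrac1n\left\|Z_{Sm}(n)+Z_{La}(n)\right\|_2^2=O\!\left(\tfrac{1}{\sqrt{\log n}}\right)\xrightarrow[n\to\infty]{}0 .
\]
So the real work is the independence assertion together with part (a).

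For the independence I would first isolate the ``building-block'' $\sigma$-algebras $\mathcal{H}_k:=\sigma\!\left(\bar{f}_k\circ T^j:\ 0\le j\le 2d_k+p_k\right)$. Property (b) says precisely that $\mathcal{H}_k$ is independent of $\sigma\!\left(\bar{f}_l\circ T^j:\ l<k,\ 0\le j\le 2d_k+p_k\right)$, and since $d_l,p_l$ are increasing in $l$ the latter contains $\mathcal{H}_1\vee\cdots\vee\mathcal{H}_{k-1}$; a routine induction then yields that $\left\{\mathcal{H}_k\right\}_{k\ge1}$ are mutually independent. Now fix $n$ (large) and set $k^\ast:=\max\{k:\ d_k\le n\}$, so that the three index sets in the decomposition are $K_1=\{1,\dots,k^\ast\}$, $K_2=\{k>k^\ast:\ p_k<n\}$ and $K_3=\{k>k^\ast:\ p_k\ge n\}$. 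Unwinding $f_k=h_k-U^{d_k}h_k$ with $h_k:=\sum_{i=0}^{p_k-1}U^i\bar{f}_k$ shows that $S_n(f_k)$ is a function of the shifts $\bar{f}_k\circ T^j$ with $0\le j\le d_k+n+p_k-2$. When $k>k^\ast$ one has $n<d_k$, so this window lies in $[0,2d_k+p_k]$ and $S_n(f_k)$ is $\mathcal{H}_k$-measurable; when $k\le k^\ast$ one has $d_k\le n<d_{k^\ast+1}$, so the window lies in $[0,2d_{k^\ast+1}+p_{k^\ast+1}]$ and $Z_{Sm}(n)$ is $\mathcal{A}_{k^\ast+1}$-measurable. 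Applying property (b) in the same way at levels $k^\ast+1,k^\ast+2,\dots$ gives that $\mathcal{A}_{k^\ast+1},\mathcal{H}_{k^\ast+1},\mathcal{H}_{k^\ast+2},\dots$ are mutually independent, and since $K_2,K_3$ are disjoint subsets of $\{k>k^\ast\}$ the required independence of $Z_{Sm}(n),\hat{Y}(n),Z_{La}(n)$ follows.

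For part (a), by independence and centering $\left\|Z_{Sm}(n)+Z_{La}(n)\right\|_2^2=\left\|Z_{Sm}(n)\right\|_2^2+\left\|Z_{La}(n)\right\|_2^2$, and I would bound the two terms separately. For $k\in K_3$ (so $n\le p_k<d_k$) the two pieces of $S_n(f_k)=S_n(h_k)-U^{d_k}S_n(h_k)$ have \emph{disjoint} supports, both inside the i.i.d.\ block $\{0,\dots,2d_k+p_k\}$; writing $S_n(h_k)=\sum_j c_j\,U^j\bar{f}_k$ with the obvious counting weights $c_j$ (so $\sum_j c_j=np_k$ and $\max_j c_j\le n$) this gives $\left\|S_n(f_k)\right\|_2^2=2\alpha_k^2\sum_j c_j^2\le 2n^2\alpha_k^2p_k=2n^2/(p_k k\log k)$. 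Summing over $k\in K_3$ (these $S_n(f_k)$ being independent and centered), and using that $p_k\ge n$ on $K_3$, that $\min K_3$ is of order $\log n$, and that $p_k$ grows geometrically, one obtains $\left\|Z_{La}(n)\right\|_2^2\le\sum_{k\in K_3}2n^2/(p_kk\log k)=O\!\left(n/\sqrt{\log n}\right)$. For $Z_{Sm}(n)$ I would \emph{not} compute $\left\|S_n(f_k)\right\|_2^2$ directly, but instead use that $f_k$ is a coboundary: $S_n(f_k)=g_k-U^ng_k$, hence $\left\|S_n(f_k)\right\|_2\le2\left\|g_k\right\|_2$; and $g_k=\sum_r c_r'\,U^r\bar{f}_k$ is supported in $\{0,\dots,d_k+p_k-2\}\subseteq\{0,\dots,2d_k+p_k\}$ with $\sum_r c_r'=p_kd_k$, $\max_r c_r'\le p_k$, so $\left\|g_k\right\|_2^2=\alpha_k^2\sum_r(c_r')^2\le\alpha_k^2p_k^2d_k=d_k/(k\log k)$. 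Minkowski's inequality then gives $\left\|Z_{Sm}(n)\right\|_2\le2\sum_{k=1}^{k^\ast}\sqrt{d_k/(k\log k)}$, and because $d_k=2^{k^2}$ grows super-geometrically this sum is dominated, up to a constant, by its last term $\sqrt{d_{k^\ast}/(k^\ast\log k^\ast)}\le\sqrt{n}/\sqrt{k^\ast\log k^\ast}$; since $d_{k^\ast}\le n<d_{k^\ast+1}$ forces $k^\ast$ to be of order $\sqrt{\log n}$, this yields $\left\|Z_{Sm}(n)\right\|_2^2=O\!\left(n/\sqrt{\log n}\right)$ and completes (a).

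The step I expect to be the real obstacle is the estimate of $Z_{Sm}(n)$. For $k\in K_1$ the shifts of $\bar{f}_k$ that enter $S_n(f_k)$ occupy a window of length of order $n$, which far exceeds the length $2d_k+p_k$ of the i.i.d.\ block that property (a) guarantees, so the clean second-moment computation that works for $Z_{La}$ is simply unavailable. The coboundary representation $S_n(f_k)=g_k-U^ng_k$ is the device that circumvents this: it confines all the relevant shifts to the i.i.d.\ block at the cost of a harmless factor $4$ in the variance, harmless precisely because $d_k$ grows so fast that only the top level $k=k^\ast$ contributes to the sum. A secondary, more bureaucratic difficulty is verifying that every shift-window occurring in $Z_{Sm}(n)$ really fits inside the range $[0,2d_{k^\ast+1}+p_{k^\ast+1}]$ controlled by $\mathcal{A}_{k^\ast+1}$, and that the summands of $f_k$ for $k\in K_2\cup K_3$ never leave $[0,2d_k+p_k]$; both rely on the rapid growth of $d_k$ and on $p_k<d_k$.
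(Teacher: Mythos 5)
Your proposal is correct and follows essentially the same route as the paper: the same three-way decomposition, independence via property (b) of the $\bar{f}_k$'s, the coboundary representation $S_n(f_k)=g_k-U^ng_k$ to handle the small-$k$ terms, and a direct second-moment computation for $Z_{La}(n)$. The only cosmetic difference is that you sum the $\left\|g_k\right\|_2$'s via Minkowski plus geometric domination, whereas the paper uses orthogonality of the independent centered $g_k$'s together with Lemma \ref{lem: app1}; both yield $O\left(n/\sqrt{\log n}\right)$.
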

\begin{proof}
For each $k\in \mathbb{N}$, $S_n\left(f_k\right)$ is a sum of functions from the sequence $\left\{\bar{f}_k\circ T^i\right\}_{i=0}^{n+d_k+p_k-1}$. For all $k$'s appearing in the sums describing $\hat{Y}(n)$ and $Z_{La}(n)$, one has $n< d_k$, therefore $\hat{Y}(n)$ and $Z_{La}(n)$ are sums of functions of the form $\left\{\bar{f}_k\circ T^i\right\}_{i=0}^{2d_k+p_k}$ with $k> \sqrt{\log n}$ and $Z_{Sm}(n)$ is a sum of functions from $\left\{\bar{f}_k\circ T^i\right\}_{i=0}^{2d_{k^*}+p_{k^*}}$ with $k\leq\sqrt{\log n}$ and $k^*$ is the smallest integer such that $k^*>\sqrt{\log n}$.\\
By property (b) in the definition of the  $\bar{f}_k$'s we see that $Z_{Sm}(n)$ is independent of $\hat{Y}(n)$ and $Z_{La}(n)$. A similar reasonning using property (b) of the functions $\bar{f}_k$ shows that $Z_{Sm}(n),\hat{Y}(n), Z_{La}(n)$ are independent. \\
We now turn to prove part (a). Since $f_k=g_k-Ug_k$ then 
\[
Z_{Sm}(n)=\sum_{k:\ d_k\leq n}\left(g_k-U^ng_k\right).
\]
By independence of $\{\bar{f}_k\circ T^i\}_{i=0}^{p_k+d_k}$, 
\begin{align*}
\left\|g_k\right\|_2^2 &=\sum_{j=0}^{d_k+p_k-1} \left(\#\left\{(l,i)\in \left[0,d_k\right)\times \left[0,p_k\right):\ l+i=j \right\}\right)^2 \left\|\bar{f}_k\right\|_2^2\\
&< \alpha_k^2p_k^2\left(p_k+d_k\right)\leq \frac{2^{k^2+1}}{k\log k}. 
\end{align*}
The functions $\{g_k\}_{k=1}^\infty$ are centered and independent, thus 
\begin{align*}
\left\|\sum_{k:\ d_k\leq n}g_k\right\|_2^2&=\sum_{k:\ d_k\leq n}\left\|g_k\right\|_2^2\\
&\leq 2\sum_{k:d_k\leq n}\frac{2^{k^2}}{k\log k}\\
&=2\sum_{k:k\leq \sqrt{\log n}}\frac{2^{k^2}}{k\log k}=O\left(\frac{n}{\sqrt{\log n}}\right), 
\end{align*}
where the last inequality follows from Lemma \ref{lem: app1}. As $U$ is unitary it follows that 
\begin{equation}\label{eq: Zsm}
\left\|Z_{Sm}(n)\right\|_2^2=O\left(\frac{n}{\sqrt{\log n}}\right). 
\end{equation}
It remains to bound $\left\|Z_{La}(n)\right\|_2^2$. First note that for $k$ such that $n\leq p_k$, by independence of the summands
\begin{align*}
\left\|S_n\left(f_k\right)\right\|_2^2&=2\sum_{j=0}^{p_k-1+n}\left(\left\|\bar{f}_k\right\|_2\#\left\{(i,l)\in \left[0,p_k-1\right]\times [0,n-1]:i+l=j\right\}\right)^2 \\
&\leq 2n^2\left(n+p_k\right)\alpha_k^2\leq 4n^2\left(\frac{1}{p_k k\log k}\right)
\end{align*}
where the last inequality holds as $n\leq p_k$. The random variables $\left\{S_n\left(f_k\right)\right\}_{\left\{k:\ p_k\geq n\right\}}$ are independent and their sum is $Z_{La}(n)$, therefore 
\[
\left\|Z_{La}(n)\right\|_2^2=\sum_{k:\ n\leq p_k}\left\|S_n\left(f_k\right)\right\|_2^2\leq 4n^2\sum_{k:\ n\leq p_k} \frac{1}{p_kk\log k}. 
\]
Since in addition, $p_k\leq 2^k$, then
\[
\sum_{k:\ n\leq p_k} \frac{1}{p_kk\log k}\leq \frac{1}{\log n}\sum_{k:\ n\leq p_k} \frac{1}{p_k}\leq \frac{4}{n\log n}.
\]
and $\left\|Z_{La}(n)\right\|_2^2=O\left(\frac{n}{\log n}\right)$. This together with \eqref{eq: Zsm} implies part (a). Part (b) is a direct consequence of part (a). 
\end{proof}
\begin{lemma}\label{lem: 2nd for CLT}
$\lim_{n\to\infty}\left(\frac{1}{n}\left\|S_n(f)\right\|_2^2\right)=2(\ln2)^2=:\sigma ^2$
\end{lemma}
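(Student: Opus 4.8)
The plan is to exploit the decomposition $S_n(f)=Z_{Sm}(n)+\hat{Y}(n)+Z_{La}(n)$ from Lemma \ref{lem: 1st for CLT}. Since the three terms are independent and centered, $\|S_n(f)\|_2^2 = \|Z_{Sm}(n)\|_2^2 + \|\hat{Y}(n)\|_2^2 + \|Z_{La}(n)\|_2^2$, and by part (a) of Lemma \ref{lem: 1st for CLT} the outer two terms contribute $O(n/\sqrt{\log n}) = o(n)$. So it suffices to compute $\lim_{n\to\infty}\frac{1}{n}\|\hat{Y}(n)\|_2^2 = \lim_{n\to\infty}\frac{1}{n}\sum_{k:\,p_k<n<d_k}\|S_n(f_k)\|_2^2$. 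Again the $f_k$ are independent and centered, so this is a genuine sum of variances.

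Next I would compute $\|S_n(f_k)\|_2^2$ exactly for $k$ in the relevant range $p_k < n < d_k$. Writing $f_k = \sum_{i=0}^{p_k-1}(U^i\bar f_k - U^{d_k+i}\bar f_k)$, the ergodic sum $S_n(f_k)$ is a linear combination $\sum_j c_j^{(k,n)}\,\bar f_k\circ T^j$ of the i.i.d. family $\{\bar f_k\circ T^j\}$ (legitimate since $n < d_k$ keeps us within the independent block of length $2d_k+p_k$), where the coefficient $c_j$ counts lattice points $(i,l)\in[0,p_k)\times[0,n)$ with $i+l+\varepsilon d_k = j$, $\varepsilon\in\{0,1\}$, with a sign $(-1)^\varepsilon$. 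Because $n < d_k$ the two blocks (from the $U^i\bar f_k$ part and the $U^{d_k+i}\bar f_k$ part) do not overlap, so there is no cancellation between them and $\|S_n(f_k)\|_2^2 = 2\alpha_k^2 \sum_j \big(\#\{(i,l): i+l=j,\ 0\le i<p_k,\ 0\le l<n\}\big)^2$. The inner sum $\sum_j (\#\{\cdots\})^2$ is a standard convolution-square count of two intervals of lengths $p_k$ and $n$; for $p_k \le n$ it equals $p_k n^2 - \tfrac13 p_k^3 + \tfrac13 p_k$ (or simply $p_k n^2(1+o(1))$ uniformly as $n\to\infty$ with $p_k<n$, since $p_k < n$). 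Hence $\|S_n(f_k)\|_2^2 = 2\alpha_k^2 p_k n^2 (1+o(1)) = \frac{2n^2}{p_k\, k\log k}(1+o(1))$.

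Therefore $\frac{1}{n}\|\hat Y(n)\|_2^2 = 2n\sum_{k:\,p_k<n<d_k}\frac{1}{p_k k\log k}(1+o(1))$, and it remains to show $n\sum_{k:\,p_k<n<d_k}\frac{1}{p_k k \log k}\to (\ln 2)^2$ (using $\sigma^2 = 2(\ln 2)^2$). Here the condition $n < d_k = 2^{k^2}$ is essentially vacuous for large $n$ (it only excludes $k \le \sqrt{\log n}$, which contribute negligibly), so the sum is effectively over $k$ with $p_k \ge n$, i.e. roughly $k \ge \log n$. Recall $p_k = 2^k$ for $k$ even and $p_k = 2^{k-1}+1$ for $k$ odd, so $\sum_k \frac{1}{p_k k\log k}$ has pairs of consecutive terms of comparable size. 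Grouping the $k$ even and $k$ odd contributions and using $\sum_{k\ge K}2^{-k}\sim 2^{-K+1}$ together with $k\log k \sim \log n \cdot \log\log n$... wait — one must be careful: $\frac{1}{k\log k}$ does vary over the dyadic range of $k$ near $\log n$. I expect the correct bookkeeping is: the relevant $k$ are those with $2^k \gtrsim n$ so $k\approx \log n$, and $n\sum_{k \gtrsim \log n}\frac{1}{p_k}\to$ a constant by the geometric tail, while the slowly-varying factor $\frac{1}{k\log k}$ should really be paired with a compensating growth — I would re-examine the definition to confirm that the product $n/(p_k k\log k)$ summed over the window $p_k \asymp n$ telescopes to $(\ln 2)^2$; this is exactly the point of choosing $\alpha_k^2 = 1/(p_k^2 k\log k)$ and the quoted constant $2(\ln 2)^2$ comes from $\sum$ over a geometric window weighted so that the harmonic-type factor integrates against $d(\text{geometric})$ to give $\int \log 2\, dx$ over a unit window, squared appropriately. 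The main obstacle is precisely this final asymptotic evaluation: getting the constant exactly right (rather than just two-sided bounds) requires careful handling of the even/odd split in $p_k$ and of the slowly varying $1/(k\log k)$ factor across the summation window of width $O(1)$ in $\log_2 n$, and I would likely invoke an auxiliary lemma (in the spirit of Lemma \ref{lem: app1}) to pin it down.
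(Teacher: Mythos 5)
Your overall strategy --- reduce to $\hat Y(n)$ via Lemma \ref{lem: 1st for CLT}, use independence of the $f_k$, and compute $\|S_n(f_k)\|_2^2$ as a convolution-square count --- is exactly the paper's (the paper organizes the same count as $A_k+B_k+C_k$ minus an independent shifted copy). However, there is a concrete arithmetic error that derails the rest. The convolution-square of the indicators of $[0,p_k)$ and $[0,n)$, for $p_k\le n$, is $\sum_j c_j^2 = n\,p_k^2-\tfrac13 p_k^3+\tfrac13 p_k$ (a plateau of \emph{height} $p_k$, the shorter length, and width $\approx n$), not $p_k n^2-\tfrac13 p_k^3+\tfrac13 p_k$ as you wrote. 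Hence $\|S_n(f_k)\|_2^2=2\alpha_k^2\,n p_k^2\,(1+o(1))=\frac{2n}{k\log k}(1+o(1))$, using $\alpha_k^2p_k^2=1/(k\log k)$; your formula $\frac{2n^2}{p_k k\log k}$ carries a spurious factor $n/p_k$, and the resulting expression $2n\sum_{k:p_k<n<d_k}\frac{1}{p_k k\log k}$ is dominated by the smallest admissible $k\approx\sqrt{\log n}$, where $n/p_k\approx n\,2^{-\sqrt{\log n}}=n^{1-1/\sqrt{\log n}}\to\infty$; so your candidate for $\frac1n\|\hat Y(n)\|_2^2$ diverges rather than converging to $2(\ln2)^2$. (Your remark that the sum is ``effectively over $k$ with $p_k\ge n$, i.e.\ roughly $k\ge\log n$'' is also backwards: the constraint $p_k<n<d_k$ means $\sqrt{\log n}<k\lesssim\log n$.)

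With the corrected count, the step you flagged as the ``main obstacle'' and left unresolved becomes $\sum_{k:\,p_k<n<d_k}\frac{1}{k\log k}\to(\ln2)^2$, and the mechanism is not a geometric window of width $O(1)$ near $k=\log n$: the sum is spread over the entire range $\sqrt{\log n}<k<\log n$ and is compared with $\int_{\sqrt{\log n}}^{\log n}\frac{dx}{x\log x}=\ln2\left(\ln\ln(\log n)-\ln\ln(\sqrt{\log n})\right)=(\ln2)^2$ (recall $\log=\log_2$ in the paper); the even/odd split in $p_k$ plays no role here since $p_k$ does not appear in the summand. The remaining lower-order terms you would still need to dispose of (the ramps, contributing $O(p_k^3\alpha_k^2)=O(p_k/(k\log k))$ per $k$ and $O(n/\log n)$ in total by Lemma \ref{lem: app2}) are handled exactly as in the paper. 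As written, the proposal does not establish the lemma.
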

\begin{proof}
By Lemma \ref{lem: 1st Lem in CLT}.(b) it is enough to show that 
\begin{equation}\label{eq: Yes}
\lim_{n\to\infty}\frac{\left\|\hat{Y}(n)\right\|_2^2}{n}=2(\ln 2)^2.
\end{equation} For a $k$ with $p_k<n<d_k$ we have,
\[
S_n\left(f_k\right)=\sum_{j=0}^{n-1}\sum_{i=0}^{p_k-1}U^{i+j}\bar{f}_k-U^{d_k}\sum_{j=0}^{n-1}\sum_{i=0}^{p_k-1}U^{i+j}\bar{f}_k
\]
hence 
\[
\hat{Y}(n)=\sum_{k:p_k<n<d_k}S_n\left(f_k\right)=\sum_{k:p_k<n<d_k}\left[\left(A_k+B_k+C_k\right)-U^{d_k}\left(A_k+B_k+C_k\right)\right]
\]
where
\[
A_k:=\sum_{i=0}^{p_k-2}(i+1)U^i\bar{f}_k,\ \ B_k:=p_k\sum_{i=p_k-1}^{n-1}U^i\bar{f}_k,\ \ C_k:=\sum_{i=n}^{n+p_k-2}\left(n+p_k-1-i\right)U^i\bar{f}_k.
\]
Using independence of $U^i\bar{f}_k,\ 0\leq i\leq p_k-1$, we deduce 
\[
\left\|A_k\right\|_2^2\leq \sum_{i=0}^{p_k-2}(i+1)^2\frac{1}{p_k^2k\log k}\leq \frac{p_k}{k\log k}\lesssim \frac{2^{k}}{k}
\]
By Lemma \ref{lem: app2} we derive,
\begin{align*}
\sum_{k:p_k<n<d_k}\left\|A_k\right\|_2^2&\leq  \sum_{k:p_k<n<d_k} \frac{2^{k}}{k}\\
&\lesssim \sum_{k:\ \sqrt{\log n}\leq k\leq \log n} \frac{2^{k}}{k}=O\left(\frac{n}{\log n}\right).
\end{align*}
Similarly we derive 
\[
\sum_{k:p_k<n<d_k}\left\|C_k\right\|_2^2= O\left(\frac{n}{\log n}\right). 
\]
Finally, 
\begin{align*}
\sum_{k:p_k<n<d_k}\left\|B_k\right\|_2^2&=\sum_{k:p_k<n<d_k} \left(n+1-p_k\right)\alpha_k^2p_k^2\\
&\sim \left[n\sum_{k:p_k<n<d_k} \frac{1}{k\log k}-\sum_{k:p_k<n<d_k} \frac{p_k}{k\log k}\right].
\end{align*}
By Lemma \ref{lem: app2}, 
\begin{align*}
\sum_{k:p_k<n<d_k} \frac{p_k}{k\log k}&\lesssim  \sum_{k:\sqrt{\log n}<k<\log n} \frac{2^k}{k}=O\left(\frac{n}{\log n}\right).
\end{align*}
Since $\lim_{k\to \infty} \frac{\log p_k}{k}=1$, then  \footnote{$\bigint_{\sqrt{\log n}}^{\log n}\frac{dx}{x\log x}=\ln2 \left(\ln\ln (\log x)-\ln\ln (\sqrt{\log x})\right)=(\ln2)^2$}
\begin{align*}
\sum_{k: p_k<n<d_k} \frac{1}{k\log k}&\sim \sum_{k:\sqrt{\log n}<k<\log n} \frac{1}{k\log k}\\
&\sim \bigints_{\ \sqrt{\log n}}^{\log n}\frac{dx}{x\log x}=\left(\ln 2\right)^2. 
\end{align*}
The random variables $A_k,B_k,C_k,U^{d_k}A_k,U^{d_k}B_k, U^{d_k}C_k$, where the index $k$ satisfies $p_k<n<d_k$  are all independent, whence 
\[
\frac{\left\|\hat{Y}(n)\right\|_2^2}{n}=\frac{2}{n}\sum_{k:p_k<n<d_k}\left(\left\|A_k\right\|_2^2+\left\|B_k\right\|_2^2+\left\|C_k\right\|_2^2\right)=2(\ln 2)^2+o(1),			
\] 
showing that \eqref{eq: Yes} holds. 
\end{proof}
Define for $i\in\{1,..,n\}$ a function $Y_i(n):X\to \mathbb{Z}$ by
\[
Y_i(n):=\sum_{\left\{k:p_k\leq i+1,\ p_k<n<d_k\right\}}p_k\left(U^i\bar{f}_k-U^{d_k+i}\bar{f}_k\right).
\]
Because
\[
\sum_{i=1}^{n-1}Y_i(n)=\sum_{\left\{k:p_k<n<d_k\right\}}\left(B_k-U^{d_k}B_k\right),
\]
the following is deduced from  Lemma \ref{lem: 1st Lem in CLT} and the proof of Lemma \ref{lem: 2nd for CLT}. 
\begin{proposition}\label{prop: decomp for CLT}
For every $n\in\mathbb{N}$, the random variables $S_n(f)-\sum_{i=1}^nY_i(n)$ and $\sum_{i=1}^nY_i(n)$ are independent and 
\[
\left\|S_n(f)-\sum_{i=1}^{n-1} Y_i(n)\right\|_2^2=O\left(\frac{n}{\sqrt{\log n}}\right).
\]
\end{proposition}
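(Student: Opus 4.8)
The plan is to cash in what the two preceding lemmas have already established; the argument is essentially bookkeeping. Recall the decomposition $S_n(f)=Z_{Sm}(n)+\hat Y(n)+Z_{La}(n)$ together with the finer splitting $S_n(f_k)=(A_k+B_k+C_k)-U^{d_k}(A_k+B_k+C_k)$ valid for every $k$ with $p_k<n<d_k$, as in the proof of Lemma \ref{lem: 2nd for CLT}. Since, as recorded just above the statement, $\sum_{i=1}^{n-1}Y_i(n)=\sum_{\{k:\,p_k<n<d_k\}}(B_k-U^{d_k}B_k)$, the first thing I would do is write
\[
S_n(f)-\sum_{i=1}^{n-1}Y_i(n)=Z_{Sm}(n)+Z_{La}(n)+\sum_{\{k:\,p_k<n<d_k\}}\bigl[(A_k+C_k)-U^{d_k}(A_k+C_k)\bigr],
\]
which isolates precisely the pieces shown to be negligible in the CLT proof. (If one reads the independence assertion literally with $\sum_{i=1}^{n}$, the extra summand $Y_n(n)=\sum_{\{k:\,p_k<n<d_k\}}p_k(U^n\bar f_k-U^{d_k+n}\bar f_k)$ is centered and is treated in exactly the same way as the other terms below.)

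Next I would prove independence by tracking which coordinates of each i.i.d. block enter each term. For a fixed $k$ with $p_k<n<d_k$, the summand $(A_k+C_k)-U^{d_k}(A_k+C_k)$ is measurable with respect to $\{\bar f_k\circ T^j\}$ for $j$ ranging in $\{0,\dots,p_k-2\}\cup\{n,\dots,n+p_k-2\}\cup\{d_k,\dots,d_k+p_k-2\}\cup\{d_k+n,\dots,d_k+n+p_k-2\}$, whereas $B_k-U^{d_k}B_k$ uses only $j\in\{p_k-1,\dots,n-1\}\cup\{d_k+p_k-1,\dots,d_k+n-1\}$; invoking $p_k<n<d_k$ one checks these two index sets are disjoint and contained in $\{0,\dots,2d_k+p_k\}$. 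Property (a) then makes the two pieces independent within the $k$-th block, property (b) makes blocks for distinct $k$ independent, and $Z_{Sm}(n),Z_{La}(n)$ only involve indices $k$ outside the range $p_k<n<d_k$. Assembling these, $\sum_i Y_i(n)$ is independent of $S_n(f)-\sum_i Y_i(n)$.

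Finally I would prove the $L^2$ bound. All summands on the right-hand side of the displayed identity are centered and mutually independent, so the squared norms add:
\[
\Bigl\|S_n(f)-\sum_{i=1}^{n-1}Y_i(n)\Bigr\|_2^2=\|Z_{Sm}(n)+Z_{La}(n)\|_2^2+\sum_{\{k:\,p_k<n<d_k\}}\bigl\|(A_k+C_k)-U^{d_k}(A_k+C_k)\bigr\|_2^2.
\]
The first term is $O(n/\sqrt{\log n})$ by Lemma \ref{lem: 1st for CLT}(a). For each term of the sum, $\|(A_k+C_k)-U^{d_k}(A_k+C_k)\|_2^2\le 4\|A_k+C_k\|_2^2\le 8(\|A_k\|_2^2+\|C_k\|_2^2)$, and the bounds $\sum_{\{k:\,p_k<n<d_k\}}\|A_k\|_2^2=O(n/\log n)$ and $\sum_{\{k:\,p_k<n<d_k\}}\|C_k\|_2^2=O(n/\log n)$ obtained in the proof of Lemma \ref{lem: 2nd for CLT} bound the sum by $O(n/\log n)=o(n/\sqrt{\log n})$. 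Adding the two contributions gives the claimed $O(n/\sqrt{\log n})$. The only step that demands any care is verifying the disjointness of the index sets in the independence argument; the rest follows immediately from the two earlier lemmas, so I do not anticipate a real obstacle here.
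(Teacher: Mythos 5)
Your proof is correct and is exactly the argument the paper intends: the paper offers no written proof, saying only that the proposition ``is deduced from Lemma \ref{lem: 1st for CLT} and the proof of Lemma \ref{lem: 2nd for CLT}'', and your decomposition $S_n(f)-\sum_i Y_i(n)=Z_{Sm}(n)+Z_{La}(n)+\sum_{k}\bigl[(A_k+C_k)-U^{d_k}(A_k+C_k)\bigr]$, combined with the bounds on $\sum_k\|A_k\|_2^2$ and $\sum_k\|C_k\|_2^2$ from that proof and the index-disjointness check for independence, is precisely that deduction made explicit. Your handling of the $\sum_{i=1}^{n}$ versus $\sum_{i=1}^{n-1}$ discrepancy in the statement is also appropriate.
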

\begin{proposition}
$S_n(f)$ converges in distribution to the normal law $\mathcal{N}(0,\sigma ^2)$ with $\sigma^2=2\left(\ln 2\right)^2$.	
\end{proposition}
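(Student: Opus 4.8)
The plan is to derive the statement from Proposition~\ref{prop: decomp for CLT} and the Lindeberg--Feller central limit theorem. Set $T_n:=\sum_{i=1}^{n-1}Y_i(n)$. By Proposition~\ref{prop: decomp for CLT}, $T_n$ is independent of $S_n(f)-T_n$ and $\|S_n(f)-T_n\|_2^2=O\bigl(n/\sqrt{\log n}\bigr)$, so $\bigl(S_n(f)-T_n\bigr)/\sqrt n\to 0$ in $L^2(\mu)$ and a fortiori in probability. Since $\mathbb E[T_n]=\mathbb E[S_n(f)]=0$, Slutsky's theorem reduces the claim to showing that $T_n/\sqrt n$ converges in distribution to $\mathcal N(0,\sigma^2)$.

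First I would unfold $T_n$ into a sum of independent one-term contributions. Using $\sum_{i=1}^{n-1}Y_i(n)=\sum_{\{k:\,p_k<n<d_k\}}(B_k-U^{d_k}B_k)$ and the definition of $B_k$ one has
\[
T_n=\sum_{\{k:\,p_k<n<d_k\}}\ \sum_{i=p_k-1}^{n-1}p_k\bigl(\bar f_k\circ T^{i}-\bar f_k\circ T^{d_k+i}\bigr).
\]
For each fixed $k$ with $p_k<n<d_k$ the indices $i$ and $d_k+i$, $p_k-1\le i\le n-1$, are pairwise distinct and lie in $[0,2d_k+p_k]$ (this uses $n<d_k$), while across distinct $k$ the blocks $\{\bar f_k\circ T^j\}_{j=0}^{2d_k+p_k}$ are mutually independent; arguing as in the proof of Lemma~\ref{lem: 1st for CLT} from properties (a) and (b), we get that $T_n=\sum_j\eta_j^{(n)}$ is a sum of centered independent random variables, each of the form $\pm p_k\,\bar f_k\circ T^{(\cdot)}$, so that $|\eta_j^{(n)}|\le p_k\le 2^k$ and $\mathbb E\bigl[(\eta_j^{(n)})^2\bigr]=p_k^2\alpha_k^2=\tfrac{1}{k\log k}$. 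Writing $v_n:=\|T_n\|_2^2=\sum_j\mathbb E\bigl[(\eta_j^{(n)})^2\bigr]$, and using that $T_n$ and $S_n(f)-T_n$ are independent and centered, one has $\|S_n(f)\|_2^2=v_n+\|S_n(f)-T_n\|_2^2$, so Lemma~\ref{lem: 2nd for CLT} and Proposition~\ref{prop: decomp for CLT} give $v_n/n\to\sigma^2$.

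It then remains to check the Lindeberg condition $\tfrac1{v_n}\sum_j\mathbb E\bigl[(\eta_j^{(n)})^2\mathbf 1_{\{|\eta_j^{(n)}|>\varepsilon\sqrt{v_n}\}}\bigr]\to 0$ for every $\varepsilon>0$, and this is where the chosen growth rates pay off. Since $|\eta_j^{(n)}|\le 2^k$, the indicator vanishes unless $2^k\ge\varepsilon\sqrt{v_n}$, that is, unless $k\ge\tfrac12\log n+O_\varepsilon(1)$; as also $p_k<n$ forces $k\le\log n+O(1)$, only the $k$ in an interval of the form $\bigl[\tfrac12\log n+O_\varepsilon(1),\,\log n+O(1)\bigr]$ can contribute. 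Bounding the surviving indicators by $1$ and using that each admissible $k$ indexes at most $2n$ of the $\eta_j^{(n)}$, each of variance $p_k^2\alpha_k^2=\tfrac1{k\log k}$, we obtain
\[
\frac1{v_n}\sum_j\mathbb E\bigl[(\eta_j^{(n)})^2\mathbf 1_{\{|\eta_j^{(n)}|>\varepsilon\sqrt{v_n}\}}\bigr]\ \le\ \frac{2n}{v_n}\sum_{\{k:\,2^k\ge\varepsilon\sqrt{v_n},\ p_k<n<d_k\}}\frac{1}{k\log k}.
\]
Since $v_n\sim\sigma^2 n$, while the last sum is comparable to $\int_{(\log n)/2}^{\log n}\frac{dx}{x\log x}$ and hence is $O(1/\log\log n)\to 0$, the right-hand side tends to $0$. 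The Lindeberg--Feller theorem now yields that $T_n/\sqrt{v_n}$, hence $T_n/\sqrt n$, converges in distribution to $\mathcal N(0,\sigma^2)$, and by the first paragraph so does $S_n(f)$.

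The step I expect to be the crux is the Lindeberg verification, and the point to get right is the decomposition of $T_n$ into the rank-one pieces $\pm p_k\,\bar f_k\circ T^{(\cdot)}$: one cannot run Lindeberg directly with the $Y_i(n)$, whose almost-sure bound is of order $n$, not $o(\sqrt n)$. Once the decomposition is in place the truncation at level $\varepsilon\sqrt n$ isolates precisely the large scales $2^k\gtrsim\sqrt n$, and the role of the normalization $\alpha_k^2=1/(p_k^2k\log k)$ is exactly that the total variance carried by these scales is $\asymp n\int_{(\log n)/2}^{\log n}\frac{dx}{x\log x}=o(n)$ --- in contrast with the lower cutoff $\sqrt{\log n}$ in Lemma~\ref{lem: 2nd for CLT}, where the analogous integral contributes the finite constant $\sigma^2$. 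Everything else is bookkeeping already carried out in Lemmas~\ref{lem: 1st for CLT}--\ref{lem: 2nd for CLT} and Proposition~\ref{prop: decomp for CLT}.
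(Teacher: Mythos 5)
Your proof is correct and follows essentially the same route as the paper: reduce to $\sum_i Y_i(n)$ via Proposition~\ref{prop: decomp for CLT} and verify the Lindeberg condition, the decisive estimate in both cases being that the variance carried by the scales $k\geq\frac12\log n+O_\epsilon(1)$, i.e.\ $n\sum_k\frac{1}{k\log k}$ over that range, is $o(n)$. The only difference is organizational --- the paper keeps $\{Y_i(n)\}_i$ as the Lindeberg array and notes that on the event $|Y_i(n)|>\epsilon\sigma\sqrt n$ the high-frequency part is at least $|Y_i(n)|/2$, whereas you atomize $T_n$ into the summands $\pm p_k\,\bar f_k\circ T^m$ so the truncation becomes deterministic in $k$; so, contrary to your closing remark, Lindeberg can in fact be run directly on the $Y_i(n)$.
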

\begin{proof}
By Proposition \ref{prop: decomp for CLT} it is sufficient to prove the convergence for $\frac{1}{\sqrt n}\sum_{i=1}^n Y_i(n)$. Since for every $n\in\mathbb{N}$, the random variables $Y_1(n),...,Y_n(n)$ are independent and centered, this will follow once we verify the Lindeberg's condition
\[
\forall \epsilon>0,\ \frac{1}{n\sigma ^2}\sum_{i=1}^{n-1} \int_X \left(Y_i(n)\mathbf{1}_{\left[Y_i(n)^2>\epsilon^2\sigma^2 n\right]}\right)^2d\mu\xrightarrow[n\to\infty]{}0.
\] 
 Because $p_k\leq 2^k$, if $J\subset\mathbb{N}$ is such that for all $j\in {J}$, $2^j<\epsilon\sigma \frac{\sqrt{n}}{8}$ then 
 \[
 \left|\sum_{j\in J} p_j\left(U^i\bar{f}_j-U^{d_j+i}\bar{f}_j\right)\right|\leq 2\sum_{j\in {J}}2^j\leq\epsilon\sigma\frac{\sqrt{n}}{2}.
 \] 
 Therefore, writing $A_{n,\epsilon}:=\left\{k: \log (\sigma\epsilon)-1+\frac{1}{2}\log n\leq k\leq \log n\right\}$, if $\left|Y_i(n)\right|>\epsilon\sigma \sqrt{n}$ then,
 \[
\left|Y_i(n)\right| \leq 2\left|\sum_{k\in A_{n,\epsilon}}p_k\left(U^i\bar{f}_k-U^{d_k+i}\bar{f}_k\right)\right|.
 \]
 We conclude that 
 \[
 \left(Y_i(n)\mathbf{1}_{\left[Y_i(n)^2>\epsilon^2\sigma^2n\right]}\right)^2\leq 4\left(\sum_{k\in A_{n,\epsilon}}p_k\left(U^i\bar{f}_k-U^{d_k+i}\bar{f}_k\right)\right)^2.
 \]	
The terms $p_kU^i\bar{f}_k$ which appear in the right hand side are mutually independent, thus for all $i\in\{1,..,n-1\}$,
\begin{align*}
\int_X \left(Y_i(n)\mathbf{1}_{\left[Y_i(n)^2>\epsilon^2\sigma^2n\right]}\right)^2d\mu&\leq 4\int_X \left(\sum_{k\in A_{n,\epsilon}}p_k\left(U^i\bar{f}_k-U^{d_k+i}\bar{f}_k\right)\right)^2d\mu\\
&=8\sum_{k\in A_{n,\epsilon}}p_k^2\left\|\bar{f}_k\right\|_2^2\\
&= 8\sum_{k\in A_{n,\epsilon}}\frac{1}{k\log k}\\
&\lesssim \ln\ln \log(n)-\ln\ln\left(\log(\sigma\epsilon)-1+\frac{1}{2}\log n\right)=o(1),
\end{align*} 
as $n\to\infty$. This proves that the Lindeberg condition holds. 
\end{proof}

\section{Proof of the local CLT}\label{sec: LLT}
For the proof of the local CLT we first start with a new presentation of the main term $\sum_{i=1}^n Y_i(n)$. For this purpose let
\[
I_n:=\left\{k\in 2\mathbb{N}:\ 2^k<n<2^{k^2} \right\}
\]
and for $k\in I_n$ set 
\[
V_k:=\sum_{i=2^k}^{n-1}\left(p_k\left(U^i\bar{f}_k-U^{i+d_k}\bar{f}_k\right)+p_{k+1}\left(U^i\bar{f}_{k+1}-U^{i+d_{k+1}}\bar{f}_{k+1}\right)\right)
\]
To explain, $I_n$ roughly denotes the even integers in the segment $p_k<n<d_k$ and for each $k\in I_n$,   $V_k$ is, up to an independent term with small second moment, almost equal to $B_{k}+B_{k+1}-U^{d_k}\left(B_k\right)-U^{d_{k+1}}\left(B_{k+1}\right)$. The next Proposition makes this claim precise. We use the notation
\[
\mathsf{U}_n:=\sum_{k\in I_n}V_k,\ \ W_n:=\sum_{i=1}^nY_i(n)
\]
\begin{proposition}
The random variables $\mathsf{U}_n$ and $E_n:=W_n-\mathsf{U}_n$ are independent and 
\[
\left\|E_n\right\|_2^2=O\left(\frac{n}{\sqrt{\log n}}\right).
\]
\end{proposition}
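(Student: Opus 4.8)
The plan is to make $E_n=W_n-\mathsf U_n$ completely explicit, by matching $\mathsf U_n$ against $W_n$ block by block, and then to read off both assertions. I first record that the whole family $\{\bar f_k\circ T^j:\ k\in\NN,\ 0\le j\le 2d_k+p_k\}$ is jointly independent: by property (a) each block $\{\bar f_k\circ T^j\}_{j=0}^{2d_k+p_k}$ is i.i.d., hence an independent family, and since $2d_l+p_l\le 2d_k+p_k$ for $l<k$, property (b) makes this block independent of the $\sigma$-algebra generated by all blocks of smaller index; an induction on $k$ then gives the joint independence. Consequently, any two random variables that are linear combinations of the generators $\bar f_k\circ T^j$ over \emph{disjoint} sets of index pairs $(k,j)$ are independent, and the $L^2$ norm of such a combination is computed coordinatewise.

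For the comparison, summing the definition of $Y_i(n)$ gives
\[
W_n=\sum_{k:\,p_k<n<d_k}\bigl(B_k-U^{d_k}B_k\bigr)+Y_n(n),\qquad Y_n(n)=\sum_{k:\,p_k<n<d_k}p_k\bigl(U^n\bar f_k-U^{d_k+n}\bar f_k\bigr),
\]
with $B_k=p_k\sum_{i=p_k-1}^{n-1}U^i\bar f_k$, while $\mathsf U_n=\sum_{k\in I_n}V_k$ and each $V_k$ splits into an $\bar f_k$-part $p_k\sum_{i=2^k}^{n-1}(U^i\bar f_k-U^{i+d_k}\bar f_k)$ and an $\bar f_{k+1}$-part $p_{k+1}\sum_{i=2^k}^{n-1}(U^i\bar f_{k+1}-U^{i+d_{k+1}}\bar f_{k+1})$. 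The key point is the arithmetic of the $p_k$'s: for $k$ even $p_k-1=2^k-1$, so the $\bar f_k$-part of $V_k$ is precisely $B_k-U^{d_k}B_k$ with its bottom rung $p_k(U^{2^k-1}\bar f_k-U^{d_k+2^k-1}\bar f_k)$ deleted; for $k+1$ odd $p_{k+1}-1=2^k$, so the $\bar f_{k+1}$-part of $V_k$ equals $B_{k+1}-U^{d_{k+1}}B_{k+1}$ whenever $k+1$ still lies in the range $p_{k+1}<n<d_{k+1}$. Since moreover $\{k\ \text{even}:\ p_k<n<d_k\}=I_n$, subtracting gives
\[
E_n=\sum_{k\in I_n}p_k\bigl(U^{2^k-1}\bar f_k-U^{d_k+2^k-1}\bar f_k\bigr)+Y_n(n)+R_n,
\]
where $R_n$ collects the finitely many blocks left over at the two ends of $I_n$: whole blocks $\pm(B_j-U^{d_j}B_j)$ for odd $j$ with $p_j<n<d_j$ but $j\notin I_n+1$, together with a possible single leftover rung. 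A direct check shows the first kind can only occur for odd $j$ with $2^{(j-1)^2}\le n<2^{j^2}$, i.e. $j\in(\sqrt{\log n},\sqrt{\log n}+1]$, hence for at most one $j$, necessarily with $j\asymp\sqrt{\log n}$.

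The $L^2$ bound is then routine. The first sum and $Y_n(n)$ are sums of independent generators, so each has squared norm at most $2\sum_{k\in I_n}p_k^2\alpha_k^2=2\sum_{k\in I_n}\tfrac1{k\log k}=O(1)$. A leftover block $B_j-U^{d_j}B_j$ has squared norm $2(n+1-p_j)\alpha_j^2p_j^2\le 2n/(j\log j)$, which for $j\asymp\sqrt{\log n}$ is $O\!\bigl(n/(\sqrt{\log n}\,\log\log n)\bigr)=o\!\bigl(n/\sqrt{\log n}\bigr)$, and the rung-type leftovers are $O(1)$; adding the finitely many pieces (via $\|x+y\|_2^2\le 2\|x\|_2^2+2\|y\|_2^2$) gives $\|E_n\|_2^2=O(n/\sqrt{\log n})$. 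For the independence, one verifies that every generator $\bar f_k\circ T^j$ appearing in the displayed formula for $E_n$ has $(k,j)$ outside the index set of $\mathsf U_n=\sum_{k\in I_n}V_k$: the deleted bottom rungs sit at shifts $2^k-1$ and $d_k+2^k-1$, just outside the intervals $[2^k,n-1]\cup[d_k+2^k,d_k+n-1]$ that $V_k$ uses (here $d_k>n$); $Y_n(n)$ sits at shifts $n$ and $d_k+n$, again outside; and the leftover blocks involve functions $\bar f_j$ whose index $j$ enters no $V_k$ with $k\in I_n$ (for the odd $j$ above, $j-1\le\sqrt{\log n}$, hence $j-1\notin I_n$; the remaining corner cases are finitely many and handled directly). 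Together with the joint independence of the generators this forces $E_n\perp\mathsf U_n$.

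The only real difficulty is this bookkeeping: pinning down exactly which pairs $(k,j)$ feed into $\mathsf U_n$, $W_n$ and hence into $E_n$, and verifying that the index sets of $E_n$ and $\mathsf U_n$ are genuinely disjoint, with particular care for the $O(1)$-many indices $k$ near the two ends $\sqrt{\log n}$ and $\log n$ of $I_n$ and for the sporadic $n$ with $n-1$ a power of $2$. Since every exceptional contribution involves only finitely many generators and is $O(1)$ or $o(n/\sqrt{\log n})$ in second moment, the $L^2$ estimate is robust; threading the disjointness of index sets cleanly through each corner case is the delicate part.
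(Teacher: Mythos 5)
Your proof is correct and follows essentially the same route as the paper's: an explicit block-by-block matching of $W_n$ against $\mathsf{U}_n$ that isolates exactly the paper's three types of leftover terms (the deleted bottom rungs $p_k(U^{p_k-1}\bar f_k-U^{d_k+p_k-1}\bar f_k)$ for $k\in I_n$, the unmatched odd block $B_{k+1}-U^{d_{k+1}}B_{k+1}$ when $d_k\le n<d_{k+1}$, and the sporadic single rung when $n-1$ is a power of $2$), followed by the same second-moment bounds and the same disjoint-index-set argument for independence. If anything, you are slightly more careful than the paper about the off-by-one between $\sum_{i=1}^{n-1}Y_i(n)$ and $W_n=\sum_{i=1}^{n}Y_i(n)$.
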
 
\begin{proof}
There are three types of terms appearing in $E_n$. The first is if there exists an even integer $k$ such that $p_k<n<d_k$ and $n\leq p_{k+1}=2^k+1$. Since for $k$ even, $p_k=2^k$, this happens if and only if $\log(n-1)=k\in 2\mathbb{N}$. In this case, writing $k=\log(n-1)$, then $V_{\log(n-1)}$ contains the term
\begin{align*}
\sum_{i=2^k}^{n-1} p_{k+1}\left(U^i\bar{f}_{k+1}-U^{i+d_{k+1}}\bar{f}_{k+1}\right)&= n\left(U^{n-1}\bar{f}_{\log(n-1)+1}-U^{n-1+d_{\log(n-1)+1}}\bar{f}_{\log(n-1)+1}\right)\\
&=A(n),
\end{align*}
where we have used that $2^k=n-1$ and $p_{k+1}=n$.\\
The second type is when there exists an even $k$ such that $d_k\leq n<d_{k+1}$. In this case $p_{k+1}<n<d_{k+1}$, therefore  $B_{k+1}-U^{d_{k+1}}B_{k+1}$ appears in $W_n$ and does not appear in $\mathsf{U}_n$. Since $n<d_{k+1}$ we see that 
\[
\sqrt{\log n}<k+1.
\]
This implies that 
\begin{equation}
\left\|B_{k+1}\right\|_2^2=\left(n+1-p_{k+1}\right)p_{k+1}^2\alpha_{k+1}^2\leq \frac{n}{k+1}\leq \frac{n}{\log n}.
\end{equation}
Finally the third type of terms comes from the fact that for each $k\in I_n$ we are not including $p_k\left(U^{p_k-1}\bar{f}_k-U^{d_k+p_k-1}\bar{f}_k\right)$ in the definition of $V_k$ while it does appear in $B_k$. \\
We conclude that
\begin{align*}
E_n&=-\mathbf{1}_{\left[\log(n-1)\in2\mathbb{N}\right]}A(n)+\sum_{k\in I_n}p_k\left(U^{p_k-1}\bar{f}_k-U^{d_k+p_k-1}\bar{f}_k\right)\\
&\ \ \  +1_{\exists k\in 2\mathbb{N}: \left[d_k\leq n<d_{k+1}\right]}\left(B_{k+1}-U^{d_{k+1}}B_{k+1}\right).
\end{align*}
The independence of $E_n$ and $\mathsf{U}_n$ follows from properties (a) and (b) in the construction of the functions $\bar{f}_k$. Finally as the terms in the sum of $E_n$ are independent, using the bound on the last term (if and when it appears)
\begin{align*}
\left\|E_n\right\|_2^2&=2\left(n^2\mathbf{1}_{\left[\log(n-1)\in2\mathbb{N}\right]}\left\|\bar{f}_{\log(n-1)+1}\right\|_2^2+ \sum_{k\in I_n}p_k^2\left\|\bar{f}_k\right\|_2^2\right)+O\left(\frac{n}{\sqrt{\log n}}\right)\\
&\leq 2\left(n^2\left(\alpha_{\log(n-1)}\right)^2+\sum_{k\in I_n}p_k^2\alpha_{k}^2 \right)+O\left(\frac{n}{\sqrt{\log n}}\right)\\
&= o(1)+\sum_{k:\ \sqrt{\log(n)}<k<\log(n)}\frac{1}{k\log k}+O\left(\frac{n}{\sqrt{\log n}}\right)=O\left(\frac{n}{\sqrt{\log n}}\right).
\end{align*}
\end{proof}
Combining this with Proposition \ref{prop: decomp for CLT}, we have shown. 
\begin{corollary}\label{cor: punch line decomposition}
The random variables $S_n(f)-\mathsf{U}_n$ and $\mathsf{U}_n$ are independent and 
\[
\left\|S_n(f)-\mathsf{U}_n\right\|_2^2=O\left(\frac{n}{\sqrt{\log(n)}}\right).
\]
Consequently,  $\frac{1}{\sqrt{n}}\mathsf{U}_n$ converges in distribution to a normal law with variance $\sigma^2=2(\ln n)^2$.  
\end{corollary}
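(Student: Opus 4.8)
The plan is to obtain the corollary by simply stacking Proposition~\ref{prop: decomp for CLT} on top of the previous proposition and then invoking the central limit theorem already established for $S_n(f)$, so that essentially no new analytic work is required.

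First I would record the telescoping identity
\[
S_n(f)-\mathsf{U}_n=\bigl(S_n(f)-W_n\bigr)+\bigl(W_n-\mathsf{U}_n\bigr)=\bigl(S_n(f)-W_n\bigr)+E_n,
\]
with $W_n=\sum_{i=1}^nY_i(n)$ and $E_n=W_n-\mathsf{U}_n$ as in the preceding two statements. For the independence claim I would argue that each of the three blocks $S_n(f)-W_n$, $E_n$ and $\mathsf{U}_n$ is measurable with respect to a sub-$\sigma$-algebra generated by a sub-collection of the independent family $\{\bar f_k\circ T^j\}$, and that these three sub-collections are pairwise disjoint: for distinct values of $k$ independence is automatic from property~(b), while for a fixed $k$ the time windows entering the three blocks inside the $k$-th strand are disjoint — the $Z$-terms and the $A_k,C_k$ pieces making up $S_n(f)-W_n$ use only indices $j\in\{0,\dots,p_k-2\}$ and $j\ge n$ (together with their $d_k$-shifts), the summands of $E_n$ use $j=p_k-1$ (and $j=n-1$ in the exceptional term), whereas $V_k\subset\mathsf{U}_n$ uses only $j\in\{p_k,\dots,n-1\}$ — so property~(a) yields independence within each strand. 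Hence $S_n(f)-W_n$, $E_n$ and $\mathsf{U}_n$ are mutually independent, and in particular $S_n(f)-\mathsf{U}_n$ is independent of $\mathsf{U}_n$. (Equivalently, one upgrades the already recorded pairwise independences $S_n(f)-W_n\perp W_n$ and $E_n\perp\mathsf{U}_n$ to the required joint statement by this same disjointness of supports.)

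Next, since $S_n(f)$, $W_n$, $\mathsf{U}_n$ are centered and $S_n(f)-W_n$, $E_n$ are independent,
\[
\bigl\|S_n(f)-\mathsf{U}_n\bigr\|_2^2=\bigl\|S_n(f)-W_n\bigr\|_2^2+\|E_n\|_2^2=O\!\left(\frac{n}{\sqrt{\log n}}\right),
\]
by Proposition~\ref{prop: decomp for CLT} and the previous proposition; even the crude bound $\|a+b\|_2^2\le 2\|a\|_2^2+2\|b\|_2^2$ already gives the displayed $O$-estimate. Finally, this estimate shows $\frac{1}{\sqrt n}\bigl(S_n(f)-\mathsf{U}_n\bigr)\to 0$ in $L^2$, hence in probability, so combining it with the already proved convergence $\frac{1}{\sqrt n}S_n(f)\xrightarrow[n\to\infty]{d}\mathcal N(0,\sigma^2)$, $\sigma^2=2(\ln 2)^2$, Slutsky's theorem gives $\frac{1}{\sqrt n}\mathsf{U}_n=\frac{1}{\sqrt n}S_n(f)-\frac{1}{\sqrt n}\bigl(S_n(f)-\mathsf{U}_n\bigr)\xrightarrow[n\to\infty]{d}\mathcal N(0,\sigma^2)$.

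I do not expect a genuine obstacle here: the corollary is a packaging step, and all the analytic content already sits in the two propositions it cites and in the CLT. The one point deserving a careful sentence is the independence claim in the first paragraph, namely the verification that within each i.i.d.\ strand $\{\bar f_k\circ T^j\}$ the time-index supports of $S_n(f)-W_n$, $E_n$ and $V_k$ are truly disjoint, so that properties~(a)--(b) of the $\bar f_k$'s deliver the full mutual independence that makes $S_n(f)-\mathsf{U}_n$ independent of $\mathsf{U}_n$, rather than only the two pairwise independences recorded earlier.
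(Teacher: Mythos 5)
Your overall route is the paper's own: the corollary is stated with no proof beyond ``combining this with Proposition \ref{prop: decomp for CLT}'', and your telescoping $S_n(f)-\mathsf{U}_n=(S_n(f)-W_n)+E_n$, the $L^2$ bound obtained from the two cited propositions, and Slutsky's theorem for the final distributional convergence are exactly the intended packaging (the variance in the statement should of course read $2(\ln 2)^2$; you use the correct value).

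There is, however, one step that fails as written: the claimed mutual independence of the three blocks $S_n(f)-W_n$, $E_n$, $\mathsf{U}_n$ via disjointness of their time-index supports. When $\log(n-1)\in 2\mathbb{N}$, the strand $k'=\log(n-1)+1$ has $p_{k'}=n$, so $S_n(f_{k'})$ belongs to $Z_{La}(n)\subset S_n(f)-W_n$ and contains the term $n\bigl(U^{n-1}\bar f_{k'}-U^{n-1+d_{k'}}\bar f_{k'}\bigr)=A(n)$; the very same $A(n)$ appears (with a minus sign) in $E_n$. Thus for infinitely many $n$ the generating sub-collections of $\{\bar f_k\circ T^j\}$ for $S_n(f)-W_n$ and for $E_n$ are not disjoint, these two blocks are not independent, and the Pythagorean identity $\|S_n(f)-\mathsf{U}_n\|_2^2=\|S_n(f)-W_n\|_2^2+\|E_n\|_2^2$ is not justified. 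This does not sink the corollary: the $O\bigl(n/\sqrt{\log n}\bigr)$ bound follows from the crude inequality $\|a+b\|_2^2\le 2\|a\|_2^2+2\|b\|_2^2$ that you already record, and the independence claim should be argued on the sum itself --- in $S_n(f)-\mathsf{U}_n=(S_n(f)-W_n)+E_n$ the two copies of $A(n)$ cancel, after which the surviving variables $\bar f_k\circ T^j$ are index-disjoint from those entering $\mathsf{U}_n$, so properties (a)--(b) of the construction give the required independence of $S_n(f)-\mathsf{U}_n$ and $\mathsf{U}_n$. With that repair your argument is complete and coincides with the paper's.
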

In the remaining part of this section we will prove that $\mathsf{U}_n$ satisfies a local CLT and use Proposition \ref{prop: essential part} to deduce the local CLT for $S_n(f)$. 
\begin{theorem}\label{thm: main LCLT part}
Writing $\sigma^2=2(\ln n)^2$ then,
\[
\sup_{x\in\mathbb{Z}}\left\|\sqrt{n}\mu \left(\mathsf{U}_n=x\right)-\frac{1}{\sqrt{2\pi \sigma^2}}e^{-x^2/2n\sigma^2}\right|\xrightarrow[n\to\infty]{}0.
\]
\end{theorem}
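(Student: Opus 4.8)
The plan is to follow the classical Fourier-analytic route to the local limit theorem, adapted to the fact that $\mathsf{U}_n$ is a sum over $k\in I_n$ of independent integer-valued blocks $V_k$. By the Fourier inversion formula for lattice-valued random variables,
\[
\sqrt{n}\,\mu(\mathsf{U}_n = x) = \frac{\sqrt{n}}{2\pi}\int_{-\pi}^{\pi} e^{-\mathrm{i}tx}\,\widehat{\mathsf{U}_n}(t)\,\mathrm{d}t
= \frac{1}{2\pi}\int_{-\pi\sqrt{n}}^{\pi\sqrt{n}} e^{-\mathrm{i}s x/\sqrt{n}}\,\widehat{\mathsf{U}_n}\!\bigl(s/\sqrt{n}\bigr)\,\mathrm{d}s,
\]
where $\widehat{\mathsf{U}_n}(t) = \rE[e^{\mathrm{i}t\mathsf{U}_n}] = \prod_{k\in I_n}\rE[e^{\mathrm{i}tV_k}]$ by independence of the $V_k$. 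The target Gaussian density has the analogous representation $\frac{1}{\sqrt{2\pi\sigma^2}}e^{-x^2/2n\sigma^2} = \frac{1}{2\pi\sqrt n}\int_{\RR}e^{-\mathrm{i}sx/\sqrt n}e^{-\sigma^2 s^2/2}\,\mathrm{d}s$. So, uniformly in $x\in\ZZ$, the quantity to be controlled is bounded by
\[
\frac{1}{2\pi}\int_{-\pi\sqrt n}^{\pi\sqrt n}\Bigl|\widehat{\mathsf{U}_n}(s/\sqrt n) - e^{-\sigma^2 s^2/2}\Bigr|\,\mathrm{d}s \;+\; \frac{1}{2\pi}\int_{|s|>\pi\sqrt n} e^{-\sigma^2 s^2/2}\,\mathrm{d}s,
\]
and the second term vanishes as $n\to\infty$. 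I would then split the first integral into three regions: a central region $|s|\le R$ for fixed $R$; an intermediate region $R\le |s|\le \delta\sqrt[4]{n}$ (the choice of scale $\sqrt[4]{n}$ being dictated by the aperiodicity bound, as flagged in the discussion and in Lemma \ref{lem: dom near 0}); and the outer region $\delta\sqrt[4]{n}\le|s|\le\pi\sqrt n$.

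The central region is handled by the CLT: Corollary \ref{cor: punch line decomposition} gives $\mathsf{U}_n/\sqrt n \Rightarrow \mathcal N(0,\sigma^2)$, so $\widehat{\mathsf{U}_n}(s/\sqrt n)\to e^{-\sigma^2 s^2/2}$ pointwise; to upgrade this to an integrable bound on $[-R,R]$ I would establish a uniform-in-$n$ domination $|\widehat{\mathsf{U}_n}(s/\sqrt n)|\le g(s)$ with $g\in L^1$ near $0$ — this is exactly the "uniform integrability of certain functions" step referred to in the overview (presumably Lemma \ref{lem: dom near 0}), after which dominated convergence finishes the central piece. For the outer region $\delta\sqrt[4]{n}\le |s|\le\pi\sqrt n$, i.e. $t=s/\sqrt n$ ranging over $[\delta n^{-1/4},\pi]$, I would prove an exponential-type decay $\sqrt n\,\sup_{|t|\in[\delta n^{-1/4},\pi]}|\widehat{\mathsf{U}_n}(t)|\to 0$. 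Here $|\rE[e^{\mathrm{i}tV_k}]|$ is a product of characteristic functions of the i.i.d. $\{-1,0,1\}$-valued ingredients $\bar f_k\circ T^j$, each of which contributes a factor like $\bigl(1-\alpha_k^2(1-\cos(p_k t)) + \dots\bigr)$; summing the $\approx n$ many such factors across one block $V_k$ with $p_k<n<d_k$ gives a bound $\exp(-c\,n\,\alpha_k^2\,\psi(p_k t))$ for a suitable $1$-periodic $\psi\ge 0$ vanishing only at integers, and the key point — this is where $p_k$ being $2^{k-1}+1$ for odd $k$ enters, giving the "strong aperiodicity" of Lemma \ref{lem: dom away from 0} — is that for every $t$ in the range at least one available $k$ makes $\psi(p_k t)$ bounded below, yielding the required decay. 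This is the step I expect to be the main obstacle: one must exploit the arithmetic of the $\{p_k\}$ to guarantee that no $t\in[\delta n^{-1/4},\pi]$ is simultaneously "near-rational with small denominator" for all the relevant scales $p_k$, and the scale $n^{-1/4}$ rather than a fixed constant is forced precisely because controlling the characteristic function on a macroscopic interval around $0$ directly is not feasible.

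Having bounded all three regions, the three integrals tend to $0$ uniformly in $x$, which is the assertion of the theorem. In writing this up I would (i) first record the Fourier inversion identity and the reduction to the three-region estimate, (ii) invoke Lemma \ref{lem: dom near 0} for the central and intermediate regions and dominated convergence, (iii) invoke Lemma \ref{lem: dom away from 0} for the outer region together with the elementary inequality $1-u\le e^{-u}$ to convert the product bound into exponential decay, and (iv) collect the pieces. The novel ingredients relative to the textbook proof are the $n^{-1/4}$ cutoff and the aperiodicity lemma that makes it work; everything else is the standard smoothing/inversion argument.
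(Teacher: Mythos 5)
Your plan is correct and follows essentially the same route as the paper: Fourier inversion, rescaling to $[-\pi\sqrt{n},\pi\sqrt{n}]$, pointwise convergence of $\phi_n(x/\sqrt{n})$ to $e^{-\sigma^2x^2/2}$ via the CLT and L\'evy continuity, and domination supplied by Lemmas \ref{lem: dom near 0} and \ref{lem: dom away from 0}. The only cosmetic difference is that the paper packages your three-region split into a single dominating function $G(x)=e^{-\sigma^2x^2/2}+\max\left(e^{-Lx^2},e^{-d\sqrt{|x|}}\right)$ and applies dominated convergence once.
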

\begin{proof}[Deduction of Theorem \ref{thm: main 2}]
By Corollary \ref{cor: punch line decomposition}, the random variables $S_n(f)$ and $\mathsf{U}_n$ satisfy the conditions of $X_n$ and $Y_n$ in Proposition \ref{prop: essential part}. Thus by Theorem \ref{thm: main LCLT part} and Proposition \ref{prop: essential part} we see that 
\[
\sup_{x\in\mathbb{Z}}\left\|\sqrt{n}\mu \left(S_n(f)=x\right)-\frac{1}{\sqrt{2\pi \sigma^2}}e^{-x^2/2n\sigma^2}\right|\xrightarrow[n\to\infty]{}0.
\] 
\end{proof}
For the proof of Theorem \ref{thm: main LCLT part} introduce the characteristic function of $\mathsf{U}_n$,
\[
\phi_n(t):=\int \exp\left(it\mathsf{U}_n\right)d\mu.
\]
The following two Lemmas are the core estimates which are used in the domination part, as in the proof of the local CLT in Terrence Tao's blog.
\begin{lemma}\label{lem: dom away from 0}
There exists $c>0$ such that for all $\sqrt[4]{n}\leq |x|\leq \pi \sqrt{n}$,
\[
\left|\phi_n\left(\frac{x}{\sqrt{n}}\right)\right|\leq \exp\left(-c\sqrt[4]{n}\right)\leq \exp\left(-d\sqrt{|x|}\right)
\]
where $d:=\frac{c}{\sqrt{\pi}}$.
\end{lemma}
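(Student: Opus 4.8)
The plan is to estimate $|\phi_n(x/\sqrt n)|$ by exploiting the independence structure inside $\mathsf{U}_n = \sum_{k\in I_n} V_k$ and isolating, for each relevant frequency $t = x/\sqrt n$, a single well-chosen index $k\in I_n$ whose contribution $V_k$ already forces strong cancellation. Since the $V_k$ are independent, $\phi_n(t) = \prod_{k\in I_n}\rE\big(e^{itV_k}\big)$, so it suffices to find one factor of size at most $\exp(-c\sqrt[4]n)$ (all other factors are bounded by $1$ in modulus). Recall $V_k$ is a sum over $i$ from $2^k$ to $n-1$ of independent terms $p_k(U^i\bar f_k - U^{i+d_k}\bar f_k) + p_{k+1}(\cdots)$; since $\bar f_k$ and its shifts form an i.i.d. sequence with $\PP(\bar f_k=\pm1)=\alpha_k^2/2$, the characteristic function of a single such $\pm 1$-bump at scale $p_k$ is $1 - \alpha_k^2(1-\cos(p_k t))$, and the full factor $\rE(e^{itV_k})$ is a product of roughly $2(n-2^k)$ such one-dimensional factors, giving
\[
\big|\rE(e^{itV_k})\big| \le \exp\Big(-c\,\alpha_k^2\,(n-2^k)\,\big(1-\cos(p_k t)\big) - c\,\alpha_{k+1}^2\,(n-2^k)\,\big(1-\cos(p_{k+1}t)\big)\Big),
\]
using $1-u\le e^{-u}$ and $\alpha_k^2 = 1/(p_k^2 k\log k)$.

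The heart of the matter is then an \emph{arithmetic/aperiodicity} step: for every $t$ with $\sqrt[4]n \le |\sqrt n\, t| \le \pi\sqrt n$, i.e. $n^{-1/4}/\sqrt n \lesssim |t| \le \pi$, I must produce some $k\in I_n$ such that either $p_k t$ or $p_{k+1}t$ is bounded away from $2\pi\ZZ$, and moreover for which the prefactor $\alpha_k^2(n-2^k)$ is large enough — on the order of $\sqrt[4]n$ or more — to beat the exponent. Here the specific choice $p_k = 2^k$ for $k$ even and $p_{k+1} = 2^{k}+1$ for $k$ odd is exactly what makes this work: the pair $(p_k,p_{k+1})$ consists of consecutive integers (one a power of two, the other that power plus one), so $\gcd(p_k,p_{k+1})=1$ and the two angles $p_k t$, $p_{k+1}t$ cannot simultaneously be close to multiples of $2\pi$ unless $t$ itself is extremely close to a rational with small denominator; a Dirichlet/three-distance type argument then guarantees that across the many available indices $k\in I_n$ (there are $\asymp\log n - \sqrt{\log n}$ of them, with $2^k$ ranging over a wide dyadic window) at least one gives $1-\cos(p_k t)$ or $1-\cos(p_{k+1}t)$ of order a constant. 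For that $k$ we also need $n - 2^k \gtrsim \sqrt n$, say, and $k \lesssim \sqrt{\log n}\cdot(\text{something})$ small enough that $\alpha_k^2(n-2^k) = (n-2^k)/(p_k^2 k\log k)$ is at least $\sqrt[4]n$; balancing $p_k^2 = 4^k$ against $n$ shows the good range is $k$ up to about $\tfrac14\log n$, and within $I_n$ (where $2^k < n$, i.e. $k<\log n$) there is ample room. Choosing $k\in I_n$ with $2^k$ near $n^{3/8}$, say, makes $\alpha_k^2(n-2^k) \asymp n^{1-3/4}/(k\log k) = n^{1/4}/(k\log k)$, and since $k\lesssim \log n$ this still dominates any fixed power $\sqrt[4]n$ up to logarithmic factors — one must be slightly careful and perhaps take $2^k$ a touch smaller (e.g. near a small power of $n$ divided by a log) so that the logarithmic loss is absorbed; this bookkeeping is the only delicate point and the constant $c$ is extracted at the end.

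I expect the main obstacle to be precisely this selection of a good frequency-adapted index $k$ uniformly over the whole annulus $\sqrt[4]n\le|x|\le\pi\sqrt n$: one has to rule out the possibility that $x/\sqrt n$ is simultaneously ``nearly rational with small denominator'' relative to \emph{all} the scales $p_k$, $k\in I_n$. The resolution is a pigeonhole argument on the dyadic scales $\{p_k\}$ combined with the coprimality of each consecutive pair $(p_k,p_{k+1})$: if $p_k t$ were within $\delta$ of $2\pi\ZZ$ for a long run of consecutive even $k$, the ratios $p_{k+1}/p_k\to 2$ would propagate the near-integrality in a way incompatible with $p_{k+1}t$ also being near-integral, forcing $t$ to lie in a set of frequencies so sparse that it is excluded by the lower bound $|x|\ge\sqrt[4]n$ (equivalently $|t|\gtrsim n^{-1/4}$ is far larger than the spacing $2\pi/n$ of the ``bad'' frequencies associated with the largest available scale). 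Once the good $k$ is fixed, the remaining estimates are the elementary bounds $1-u\le e^{-u}$, $1-\cos\theta\ge c\,\mathrm{dist}(\theta,2\pi\ZZ)^2$, and the arithmetic of $\alpha_k^2(n-2^k)$ recorded above, and the final inequality $\exp(-c\sqrt[4]n)\le\exp(-d\sqrt{|x|})$ is immediate from $|x|\le\pi\sqrt n$ with $d=c/\sqrt\pi$.
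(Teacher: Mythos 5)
Your overall strategy (factor $\phi_n$ over the independent blocks and extract decay from the fact that $p_k$ and $p_{k+1}$ are \emph{consecutive integers}) points at the right mechanism, but the central step is not proved and the quantitative claim you make about it is false, which then breaks your bookkeeping. Writing $\|\theta\|$ for the distance from $\theta$ to $2\pi\mathbb{Z}$, your assertion that some $k\in I_n$ gives $1-\cos(p_kt)$ or $1-\cos(p_{k+1}t)$ ``of order a constant'' fails: take $t=2\pi\,2^{-k_0}$ with $k_0$ the smallest even integer exceeding $\sqrt{\log n}$ (this $t$ lies well inside $[n^{-1/4},\pi]$). Then $\|2^kt\|=0$ for every $k\geq k_0$ and $\|(2^k+1)t\|=\|t\|=2\pi 2^{-k_0}\to 0$, so no index in $I_n$ gives a constant. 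The correct uniform statement that consecutiveness buys you is the one-line triangle inequality $\|p_kt\|+\|p_{k+1}t\|\geq\|(p_{k+1}-p_k)t\|=\|t\|\geq n^{-1/4}$, hence only $1-\cos\gtrsim n^{-1/2}$ for at least one of the two scales --- no Dirichlet/three-distance or pigeonhole argument over runs of $k$ is needed, and none that you sketch is actually carried out. With the true bound $n^{-1/2}$ in place, your choice $2^k\asymp n^{3/8}$ yields an exponent $\asymp \alpha_k^2(n-2^k)\cdot n^{-1/2}\asymp n^{1/4}\cdot n^{-1/2}=n^{-1/4}$, which proves nothing; you must instead take $k$ at the \emph{bottom} of $I_n$, where $p_k^2=2^{2k}\leq 2^{2\sqrt{\log n}+O(1)}=n^{o(1)}$, so that $\alpha_k^2(n-2^k)\cdot n^{-1/2}\gtrsim n^{1/2-o(1)}\cdot n^{-1/2}\cdot n^{1/2-o(1)}$ --- more precisely $\frac{n}{2^{2k}k\log k}\cdot n^{-1/2}\gtrsim n^{1/2}2^{-5\sqrt{\log n}}\gg n^{1/4}$.

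The paper packages this same arithmetic fact more cleanly: it works with the joint variable $X_k(1)=p_k\bar f_k+p_{k+1}\bar f_{k+1}$, which, precisely because $p_{k+1}-p_k=1$, charges the points $\pm 1$ with probability $(\alpha_k\alpha_{k+1})^2/4$ each. Keeping only that atom gives $\bigl|\rE\,e^{itX_k(1)}\bigr|\leq 1-\tfrac{1}{2}(\alpha_k\alpha_{k+1})^2(1-\cos t)$, where the frequency is $t$ itself rather than $p_kt$; on $n^{-1/4}\leq|t|\leq\pi$ one has $1-\cos t\geq\tfrac{1}{4\sqrt n}$ uniformly, so no aperiodicity analysis is required at all, and summing the exponents over all $k\in I_n$ (dominated by $k\approx\sqrt{\log n}$) gives $\sqrt n\,2^{-5\sqrt{\log n}}\gtrsim n^{1/4}$. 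To repair your write-up you should replace the vague Diophantine step by the triangle inequality above (or by the paper's joint-atom observation, which is equivalent) and move your chosen $k$ from $2^k\asymp n^{3/8}$ down to $k\asymp\sqrt{\log n}$.
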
  
\begin{lemma}\label{lem: dom near 0}
There exists $N\in\mathbb{N}$ and a constant $L>0$ such that for all $n>N$ and $|x|\leq \sqrt[4]{n}$,
\[
\left|\phi_n\left(\frac{x}{\sqrt{n}}\right)\right|\leq \exp\left(-Lx^2\right).
\]	
\end{lemma}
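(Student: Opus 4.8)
The plan is to factorize $\phi_n$ completely using the independence built into the construction of the $\bar f_k$, then to keep only a band of indices $k$ lying between $\sqrt{\log n}$ and $\tfrac14\log n$ on which a crude quadratic lower bound for $1-\cos$ is available, and finally to check that this band alone already forces the required Gaussian decay. For the factorization: since $I_n\subseteq 2\mathbb N$, for distinct $k,k'\in I_n$ the blocks $\{\bar f_k,\bar f_{k+1}\}$ and $\{\bar f_{k'},\bar f_{k'+1}\}$ are disjoint, and (because $n<d_k$) every power $U^j$ occurring in $V_k$ has $0\le j\le 2d_{k+1}+p_{k+1}$, so properties (a)--(b) of the $\bar f_k$ make $(V_k)_{k\in I_n}$ independent and $\phi_n(t)=\prod_{k\in I_n}\mathbb E\,e^{itV_k}$. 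Writing $M_k:=n-2^k$ and $V_k=W_k+W_k'$ with
\[
W_k:=p_k\sum_{i=2^k}^{n-1}\bigl(U^i\bar f_k-U^{i+d_k}\bar f_k\bigr),
\qquad
W_k':=p_{k+1}\sum_{i=2^k}^{n-1}\bigl(U^i\bar f_{k+1}-U^{i+d_{k+1}}\bar f_{k+1}\bigr),
\]
one has $W_k\perp W_k'$, and (again using $n<d_k$) the $2M_k$ exponents $i$ and $i+d_k$, $2^k\le i\le n-1$, are distinct, so by property (a) $W_k$ is a sum of $2M_k$ i.i.d.\ variables, each taking the values $p_k$ and $-p_k$ with probability $\alpha_k^2/2$ and the value $0$ with probability $1-\alpha_k^2$. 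Hence $\mathbb E\,e^{itW_k}=\bigl(1-\alpha_k^2(1-\cos(p_kt))\bigr)^{2M_k}\in[0,1]$, and discarding the $W_k'$-factors (each of modulus at most $1$) gives
\[
|\phi_n(t)|\ \le\ \prod_{k\in I_n}\bigl(1-\alpha_k^2\,(1-\cos(p_kt))\bigr)^{2M_k}.
\]

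Next I would take $t=x/\sqrt n$ with $|x|\le\sqrt[4]{n}$ and restrict the product to $G_n:=\{k\in 2\mathbb N:\ \sqrt{\log n}<k<\tfrac14\log n\}$, which is contained in $I_n$; the discarded factors are again at most $1$. For $k\in G_n$ (so $p_k=2^k<n^{1/4}$) one has $p_k|t|=2^k|x|/\sqrt n\le n^{1/4}\cdot n^{1/4}/\sqrt n=1$, so the elementary bound $1-\cos\theta\ge\tfrac{2}{\pi^2}\theta^2$ on $[-\pi,\pi]$ applies; combining it with $\alpha_k^2p_k^2=\tfrac1{k\log k}$, $M_k=n-2^k\ge n/2$ (for $n$ large), $(1-u)^{2M}\le e^{-2Mu}$, and $nt^2=x^2$ yields
\[
\Bigl|\phi_n\bigl(\tfrac x{\sqrt n}\bigr)\Bigr|\ \le\ \prod_{k\in G_n}\exp\!\Bigl(-\tfrac{2x^2}{\pi^2\,k\log k}\Bigr)\ =\ \exp\!\Bigl(-\tfrac{2}{\pi^2}\,x^2\sum_{k\in G_n}\tfrac1{k\log k}\Bigr).
\]

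It then remains to bound $\sum_{k\in G_n}\tfrac1{k\log k}$ from below by a fixed positive constant for all large $n$. By the same comparison with $\int\tfrac{dk}{k\log k}$ used in the proof of Lemma \ref{lem: 2nd for CLT} --- now restricted to even $k$, which halves the value, and with the coefficient $\tfrac14$ in $\tfrac14\log n$ harmless since $\ln\ln(\tfrac14\log n)-\ln\ln(\sqrt{\log n})\to\ln 2$ --- one gets $\sum_{k\in G_n}\tfrac1{k\log k}\to\tfrac12(\ln 2)^2>0$. Thus there is $N$ with $\sum_{k\in G_n}\tfrac1{k\log k}\ge\tfrac14(\ln 2)^2$ for every $n>N$, and then $|\phi_n(x/\sqrt n)|\le\exp(-Lx^2)$ with $L:=\tfrac{(\ln 2)^2}{2\pi^2}$, which is the assertion.

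The step I expect to be the real point, rather than a routine computation, is the passage from the full product over $I_n$ to the product over the band $G_n$. Unlike in the near-zero step of a classical local limit theorem, $\phi_n$ cannot be Taylor-expanded at the origin, since the summands have amplitudes $p_k$ running up to order $n$ and so $p_kt$ need not be small even when $t=x/\sqrt n$ with $x$ bounded; discarding the large-$p_k$ factors is the only available remedy, and it works solely because we need no more than a fixed amount of quadratic decay. The crucial point is then that the restriction $|x|\le\sqrt[4]{n}$ is exactly what keeps $p_k|t|\le1$ over a band reaching up to $\tfrac14\log n$, so that the surviving $\int\tfrac{dk}{k\log k}$-mass stays bounded away from $0$; had the complementary range --- the one disposed of by the aperiodicity estimate of Lemma \ref{lem: dom away from 0} --- begun only at, say, $\sqrt n/\log n$, then for the worst choice of $x$ the corresponding band would shrink to $(\sqrt{\log n},\log\log n)=\varnothing$ and the argument would collapse. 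In this sense the strength of the present near-zero bound is inseparable from the sharpness of the aperiodicity bound that precedes it.
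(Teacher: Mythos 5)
Your proof is correct, and it takes a genuinely different route from the paper's. The paper isolates a sub-sum $\mathsf{Z}_n=\sum_{j=1}^n\Upsilon_n(j)$ grouped by the \emph{time} index $j$, where $\Upsilon_n(j)$ collects the contributions $p_kU^j\bar f_k$ over the band $J_n=\{k:\ 2^k\le\sqrt[4]{n}/3,\ n<2^{k^2}\}$; it discards $\mathsf{U}_n-\mathsf{Z}_n$ by independence and then proves the Gaussian bound for $\mathsf{Z}_n$ (Lemma \ref{lem: small t}) via a fourth-order Taylor expansion of $\mathbb{E}\exp(it\Upsilon_n(j))$, the band restriction entering through the fourth-moment estimate $\mathbb{E}(\Upsilon_n(j)^4)\lesssim\sqrt{n}\,\mathbb{E}(\Upsilon_n(j)^2)$. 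You instead keep the factorization of $\phi_n$ over the \emph{frequency} blocks $k\in I_n$ (the same exact product formula the paper records in its Remark and exploits in Lemma \ref{lem: dom away from 0}), discard all factors outside the band $G_n$, and on the surviving factors replace the Taylor expansion by the elementary inequality $1-\cos\theta\ge\frac{2}{\pi^2}\theta^2$, which is applicable precisely because $|x|\le\sqrt[4]{n}$ and $2^k\le n^{1/4}$ force $p_k|t|\le1$. Both arguments rest on the identical mechanism — the admissible band reaches up to $k\approx\frac14\log n$ and therefore still carries a non-vanishing share of the $\sum 1/(k\log k)$ mass, namely an asymptotic $\frac12(\ln 2)^2$ versus the full $(\ln 2)^2$ — and your closing discussion of why this is inseparable from the sharpness of Lemma \ref{lem: dom away from 0} is exactly the point the paper makes in its outline. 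What your route buys is economy: it dispenses with the auxiliary random variables $\mathsf{Z}_n$, $\Upsilon_n(j)$, the moment computations, and Lemma \ref{lem: small t} altogether, and it treats the two regimes $|x|\le\sqrt[4]{n}$ and $|x|\ge\sqrt[4]{n}$ by one and the same product estimate with different retained ranges of $k$; the paper's moment method is less sharp here but is the more robust template when no exact product formula is available.
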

\begin{proof}[Proof of Theorem \ref{thm: main LCLT part}]
Let $m\in\mathbb{Z}$. By Fourier inversion,
\[
\mu\left(\mathsf{U}_n=m\right)=\frac{1}{2\pi}\int_{-\pi}^{\pi}\phi_n(t)e^{-itm}dt
\]
Applying the change of variable, $t=\frac{x}{\sqrt{n}}$, we see that
\[
\sqrt{n}\mu\left(\mathsf{U}_n=m\right)=\frac{1}{2\pi}\int_{-\pi\sqrt{n}}^{\pi\sqrt{n}}\phi_n\left(\frac{x}{\sqrt{n}}\right)e^{-ixm/\sqrt{n}}dx
\]
Since, 
\[
\frac{1}{\sqrt{2\pi}\sigma}e^{-m^2/2n\sigma^2}=\frac{1}{2\pi}\int_{\mathbb{R}} e^{-\sigma^2x^2/2}e^{-ixm/\sqrt{n}}dx
\]
then it remains to show that
\[
\sup_{m\in\mathbb{Z}}\left|\frac{1}{2\pi}\int_{-\pi\sqrt{n}}^{\pi\sqrt{n}}\phi_n\left(\frac{x}{\sqrt{n}}\right)e^{-ixm/\sqrt{n}}dx-\frac{1}{2\pi}\int_{\mathbb{R}} e^{-\sigma^2x^2/2}e^{-ixm/\sqrt{n}}dx\right|=o(1).
\]
Using the triangle inequality and
\[
\int_{|x|\geq \pi\sqrt{n}}e^{-\sigma^2x^2/2}dx\xrightarrow[n\to\infty]{}0,
\]
it suffices to show that
\begin{equation}\label{eq: LLT suff}
\int_{-\pi\sqrt{n}}^{\pi\sqrt{n}}\left|\phi_n\left(\frac{x}{\sqrt{n}}\right)-e^{-\sigma^2x^2/2}\right|dx\xrightarrow[n\to\infty]{}0.
\end{equation}
Since $\frac{\mathsf{U}_n}{\sqrt{n}}$ converges in distribution to a centered normal with variance $\sigma^2$, it follows from Levy's continuity theorem that for all $x\in \mathbb{R}$,
\[
\Psi_n(x):=\mathbf{1}_{[-\pi\sqrt{n},\pi\sqrt{n}}](x)\left|\phi_n\left(\frac{x}{\sqrt{n}}\right)-e^{-\sigma^2x^2/2}\right|\xrightarrow[n\to\infty]{}0.	
\]
In addition, by Lemmas \ref{lem: dom away from 0} and \ref{lem: dom near 0}, for every $n$ large, for all $|x|\leq\pi\sqrt{n}$, 
\[
\Psi_n(x)\leq G(x)
\]
where
\[
G(x)=e^{-\sigma^2x^2/2}+\max\left(\exp\left(-Lx^2\right),\exp\left(-d\sqrt{|x|}\right)\right).
\] 
and $L$ and $d$ are the constants in the Lemmas. Since $G$ is integrable, it follows from the dominated convergence theorem that 
\[
\int_{-\pi\sqrt{n}}^{\pi\sqrt{n}}\left|\phi_n\left(\frac{x}{\sqrt{n}}\right)-e^{-\sigma^2x^2/2}\right|dx=\int_{\mathbb{R}}\Psi_n(x)dx\xrightarrow[n\to\infty]{}0.
\]
The proof is thus concluded. 
\end{proof}
\begin{remark}
For $k\in I_n$ and $j\leq n$ we let $X_{k}(j)$ be an i.i.d. sequence which is distributed as
\[
p_kU^j\bar{f}_k+p_{k+1}U^j\bar{f}_{k+1}.
\]
The random variable $X_k(j)$ takes values in $\left\{0,\pm 1,\pm 2^k,\pm \left(2^k+1\right),\pm  \left(2^{k+1}+1\right)\right\}$.\\
We assume $\left\{X_k(j)\right\}_{k\in I_n,\ j\leq n}$ are independent. Note that writing
\[
\mathsf{X}_n:=\sum_{k\in I_n}\sum_{j=2^k}^nX_k(j),
\]
then if $\mathsf{X}_n,\mathsf{X}_n'$ are independent identically distributed, then $\mathsf{U}_n \overset{d}{=} \mathsf{X}_n+\mathsf{X}_n'$. Therefore 
\[
\phi_n(t)=\prod_{k\in I_n}\left[\mathbb{E}\left(\exp(itX_k(1))\right)\right]^{2\left(n-2^k\right)}
\]
\end{remark}
\begin{proof}[Proof of Lemma \ref{lem: dom away from 0}]
	First note that for all $k\in I_n$
	\[
	\mathbb{P}\left(X_{k}(1)=z\right)=\begin{cases}
	\left(1-\alpha_{k}^{2}\right)\left(1-\alpha_{k+1}^{2}\right), & z=0\\
	\frac{\alpha_{k}^{2}}{2}\left(1-\alpha_{k+1}^{2}\right) & z\in\left\{ \pm2^{k}\right\} \\
	\frac{\alpha_{k+1}^{2}}{2}\left(1-\alpha_{k}^{2}\right) & z\in\left\{ \pm\left(2^{k}+1\right)\right\} \\
	\frac{\left(\alpha_{k}\alpha_{k+1}\right)^{2}}{4} & z\in\left\{ \pm1,\pm\left(2^{k+1}+1\right)\right\} 
	\end{cases}
	\]
	so substituting $t=\frac{x}{\sqrt{n}}\in[-\pi,\pi]$, 
	\begin{align*}
	\left|\phi_{n}\left(\frac{x}{\sqrt{n}}\right)\right| &=\prod_{k\in I_{n}}\left|\mathbb{E}\left(\exp(ix X_k(1)/\sqrt{n}\right)^{2\left(n-2^{k}\right)}\right|\\
	&=\prod_{k\in I_{n}}\left|\mathbb{E}\left(\exp(it X_k(1)\right)\right|^{2\left(n-2^{k}\right)}\\
	& \leq \prod_{k\in I_{n}}\left(1-\frac{\left(\alpha_{k}\alpha_{k+1}\right)^{2}}{2}\left(1-\cos t\right)\right)^{2\left(n-2^{k}\right)}.
	\end{align*}
	For the bound of $\sqrt[4]{n}\leq|x|\leq\pi\sqrt{n}$ note that for
	such $x$, 
	\[
	\cos\left(t\right)\leq\cos\left(\frac{1}{\sqrt[4]{n}}\right)\leq1-\frac{1}{4\sqrt{n}}.
	\]
	This shows that for  $\sqrt[4]{n}\leq|x|\leq\pi\sqrt{n}$, 
	\begin{align*}
	\left|\phi_{n}\left(\frac{x}{\sqrt{n}}\right)\right|& \leq\prod_{k\in I_{n}}\left(1-\frac{\left(\alpha_{k}\alpha_{k+1}\right)^{2}}{8\sqrt{n}}\right)^{2(n-2^{k})}\\
	&\leq\exp\left(-\frac{1}{4\sqrt{n}}\left(\sum_{k\in I_{n}}n\left(\alpha_{k}\alpha_{k+1}\right)^{2}-\sum_{k\in I_{n}}\left(2^{k}\right)\alpha_{k}^{4}\right)\right).
	\end{align*}
	A calculation shows that 
	\begin{align*}
	\sum_{k\in I_{n}}\left(2^{k}\right)\alpha_{k}^{4}\leq \sum_{k\in I_{n}}\left(2^{k}\right)2^{-4k}=o(1),\ \text{as}\ n\to\infty,
	\end{align*}
	and
	\begin{align*}
	\sum_{k\in I_{n}}\left(\alpha_{k}\alpha_{k+1}\right)^{2} & \sim\sum_{k\in2\mathbb{N}:\ \sqrt{\log n}\leq k\leq\log n}\alpha_{k}^{4}\\
	& \geq\sum_{k\in\mathbb{N}:\ \sqrt{\log n}\leq2k\leq\log n}32^{-2k}\\
	&\geq C2^{-10\left(\sqrt{\log n}/2\right)}
	=C2^{-5\sqrt{\log n}}
	\end{align*}
	for some global constant $C$ which does not depend on $n$. Since
	\[
	\sqrt{n}2^{-5\sqrt{\log n}}\gtrsim\sqrt[4]{n}
	\]
	we have shown that there exists $c>0$ such that for all $\sqrt[4]{n}\leq x\leq\pi\sqrt{n}$, 
	
	\[
	\left|\phi_{n}(x)\right|\leq\exp\left(-c\sqrt[4]{n}\right)\leq \exp\left(-d\sqrt{|x|}\right).
	\]
	where $d=\frac{c}{\sqrt{\pi}}$.
\end{proof}
Define for $n\in\mathbb{N},$ 
\[
J_{n}:=\left\{ k\in\mathbb{N}:2^{k}\leq\frac{\sqrt[4]{n}}{3},\ n<2^{k^{2}}\right\} =\left(\sqrt{\log n},\frac{\log n}{4}-\log3\right]\cap\mathbb{N}.
\]
and for $1\leq j\leq n$, 
\[
\Upsilon_{n}(j)=\sum_{k\in J_{n}:\ 2^k\leq j}p_{k}U^{j}\bar{f}_{k}
\]
and
finally $\mathsf{Z}_{n}=\sum_{j=1}^{n}\Upsilon_n(j)$. Note that $\left\{ \Upsilon_{n}(j)\right\} _{j=1}^{n}$
are independent and that $\mathsf{U}_n-\mathsf{Z}_{n}$ and $\mathsf{Z}_n$
are independent by the construction. 
\begin{lemma}
	\label{lem: small t}There exists $N\in\mathbb{N}$ and a constant
	$L>0$ such that for all $n>N$ and $|x|\leq\sqrt[4]{n}$,
	\[
	\left|\mathbb{E}\left(\exp(ix\mathsf{Z}_n/\sqrt{n}\right)\right|\leq\exp\left(-Lx^{2}\right)
	\]
\end{lemma}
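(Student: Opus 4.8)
The plan is to compute the characteristic function of $\mathsf{Z}_n$ explicitly and to exploit that, in the range $|x|\le\sqrt[4]{n}$, every phase that occurs is uniformly small. First I would regroup the double sum
\[
\mathsf{Z}_n=\sum_{j=1}^n\Upsilon_n(j)=\sum_{k\in J_n}p_k\sum_{j=2^k}^nU^j\bar f_k,
\]
which is legitimate since $2^k\le \tfrac{\sqrt[4]{n}}{3}<n$ for $k\in J_n$. For a fixed $k\in J_n$ one has $n<2^{k^2}=d_k$, so $\{\bar f_k\circ T^j\}_{j=2^k}^{n}$ sits inside the i.i.d.\ block $\{\bar f_k\circ T^j\}_{j=0}^{2d_k+p_k}$ of property (a); and because $n\le 2d_k+p_k$, property (b), applied to the largest index occurring in $J_n$ and used inductively downward, shows that the blocks $\bigl\{p_k\sum_{j=2^k}^{n}U^j\bar f_k:k\in J_n\bigr\}$ are mutually independent. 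Using $\mathbb{E}\bigl(e^{itp_k\bar f_k}\bigr)=1-\alpha_k^2\bigl(1-\cos(tp_k)\bigr)$ this gives
\[
\Bigl|\mathbb{E}\bigl(e^{it\mathsf{Z}_n}\bigr)\Bigr|=\prod_{k\in J_n}\bigl(1-\alpha_k^2(1-\cos(tp_k))\bigr)^{\,n-2^k+1},
\]
the base lying in $(0,1]$ once $k$ (hence $n$) is large, since $\alpha_k^2\le\frac1{k\log k}\to0$.

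Next I would substitute $t=\frac{x}{\sqrt n}$ with $|x|\le\sqrt[4]{n}$. Since $p_k\le 2^k\le\frac{\sqrt[4]{n}}{3}$ for $k\in J_n$, we get $|tp_k|=\frac{|x|p_k}{\sqrt n}\le\frac{\sqrt[4]{n}}{\sqrt n}\cdot\frac{\sqrt[4]{n}}{3}=\frac13$, so in particular $1-\cos(tp_k)\ge c_0(tp_k)^2$ for an absolute constant $c_0>0$ valid on $[-\tfrac13,\tfrac13]$ (close to $\tfrac12$). Combining $1-y\le e^{-y}$ with the identity $\alpha_k^2p_k^2=\frac1{k\log k}$ and $(tp_k)^2=\frac{x^2p_k^2}{n}$, together with $\frac{n-2^k+1}{n}\ge\frac12$ for $n$ large (because $2^k\le\frac{\sqrt[4]{n}}{3}$), yields
\[
\bigl(1-\alpha_k^2(1-\cos(tp_k))\bigr)^{\,n-2^k+1}\le\exp\!\Bigl(-c_0\,\frac{n-2^k+1}{n}\cdot\frac{x^2}{k\log k}\Bigr)\le\exp\!\Bigl(-\frac{c_0}{2}\cdot\frac{x^2}{k\log k}\Bigr),
\]
and multiplying over $k\in J_n$ gives
\[
\Bigl|\mathbb{E}\bigl(e^{ix\mathsf{Z}_n/\sqrt n}\bigr)\Bigr|\le\exp\!\Bigl(-\frac{c_0}{2}\,x^2\sum_{k\in J_n}\frac1{k\log k}\Bigr).
\]

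It then remains to bound $\sum_{k\in J_n}\frac1{k\log k}$ below by a positive constant, uniformly for $n$ large. Since $J_n=\bigl(\sqrt{\log n},\frac{\log n}{4}-\log3\bigr]\cap\mathbb N$ and $x\mapsto\frac1{x\log x}$ is decreasing, comparison with $\int\frac{dx}{x\log x}=\ln2\cdot\ln\log x+C$ gives $\sum_{k\in J_n}\frac1{k\log k}\to(\ln2)^2$ as $n\to\infty$ — the same computation as in the footnote following Lemma~\ref{lem: 2nd for CLT}, with the upper endpoint $\log n$ replaced by $\frac{\log n}{4}$, a change that does not affect the limit. Hence there is $N$ so that this sum exceeds $\frac{(\ln2)^2}{2}$ for all $n>N$, and the lemma holds with $L=\frac{c_0(\ln2)^2}{4}$. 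Everything here is elementary; the only genuinely structural point — and the place to be careful — is the interplay between the range $|x|\le\sqrt[4]{n}$ and the constraint $2^k\le\frac{\sqrt[4]{n}}{3}$ built into the definition of $J_n$, which is exactly what forces every phase $tp_k$ into $[-\tfrac13,\tfrac13]$ so that the quadratic lower bound for $1-\cos$ applies simultaneously to all $k$; the secondary point requiring attention is the justification of the product formula, i.e.\ extracting the mutual independence of the $k$-blocks from property (b).
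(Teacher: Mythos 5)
Your proof is correct, but it takes a genuinely different route from the paper's. You factor the characteristic function of $\mathsf{Z}_n$ completely, regrouping by $k$ so that $\bigl|\mathbb{E}(e^{it\mathsf{Z}_n})\bigr|=\prod_{k\in J_n}\bigl(1-\alpha_k^2(1-\cos(tp_k))\bigr)^{n-2^k+1}$, and then exploit that the definition of $J_n$ (namely $p_k\le 2^k\le \sqrt[4]{n}/3$) together with $|x|\le\sqrt[4]{n}$ forces every phase $tp_k$ into $[-\tfrac13,\tfrac13]$, where $1-\cos u\ge c_0u^2$; the identity $\alpha_k^2p_k^2=\tfrac{1}{k\log k}$ and the asymptotics $\sum_{k\in J_n}\tfrac1{k\log k}\to(\ln 2)^2$ then finish the job. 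The paper instead keeps the grouping by $j$, computes the second and fourth moments of each $\Upsilon_n(j)$ (using the same constraint $2^k\le\sqrt[4]{n}/3$ to control the fourth moment by $\tfrac{\sqrt n}{9}\sum_{k\in J_n}\tfrac1{k\log k}$, and symmetry to kill the third moment), and applies the standard Taylor bound $|\mathbb{E}(e^{it\Upsilon_n(j)})|\le 1-\tfrac{t^2}{2}\mathbb{E}(\Upsilon_n(j)^2)+\tfrac{t^4}{24}\mathbb{E}(\Upsilon_n(j)^4)$ for $|t|\le n^{-1/4}$, multiplying only over $n/2\le j\le n$. Both arguments rest on the same two structural facts — full mutual independence of the array $\{\bar f_k\circ T^j\}$ in the relevant ranges (which your downward induction on property (b) justifies correctly) and the interplay between $|x|\le\sqrt[4]{n}$ and $2^k\le\sqrt[4]{n}/3$ — but your direct factorization is more elementary (no appeal to a characteristic-function expansion lemma) and yields an explicit constant $L=\tfrac{c_0(\ln 2)^2}{4}$, at the mild cost of having to verify the product formula and the positivity of each factor, which you do. The one point worth making explicit is that $J_n$ is nonempty only for $n$ large, but this is absorbed into the choice of $N$ in both proofs.
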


\begin{proof}
	Now $\Upsilon_{n}(j),\ 1\leq j\leq n$ are independent, for all $1\leq j\leq n$,
	$\mathbb{E}\left(\Upsilon_{n}(j)\right)=0$ and 
	\begin{align*}
	\mathbb{E}\left(\left(\Upsilon_{n}(j)\right)^{2}\right) & =\sum_{\left\{ k\in J_{n}:2^k\leq j\right\} }p_{k}^{2}\alpha_{k}^{2}\\
	& =\sum_{\left\{ k\in J_{n}:2^k\leq j\right\} }\frac{1}{k\log k}.
	\end{align*}
	For all $\frac{n}{2}\leq j\leq n$,\footnote{For all $k\in J_{n},\ 2^{k}\leq\sqrt[4]{n}/3<n/2$}
	\begin{equation}
	\sum_{\left\{ k\in J_{n}:2^k\leq j\right\} }\frac{1}{k\log k}=\sum_{k\in J_{n}}\frac{1}{k\log k}.\label{eq:Heart}
	\end{equation}
	A similar argument as in the proof of Lemma \ref{lem: 2nd for CLT} shows that there exists $\beta>0$
	such that 
	\begin{align*}
	\sum_{k\in J_{n}}\frac{1}{k\log k} & =\sum_{\sqrt{\log n}< k\leq\frac{\log n}{4}-\log3}\frac{1}{k\log k}\\
	&\sim \int_{\sqrt{\log(n)}}^{\frac{\log(n)}{4}-\log3}\frac{dx}{x\log x}\\
	& =\ln2\left(\ln\ln\left(\frac{\log n}{4}-\log3\right)-\ln\ln\left(\sqrt{\log n}\right)\right)\\
	& =\ln2\left(\ln\ln(\log n)-\ln\left(\frac{\ln(\log(n))}{2}\right)\right)+o(1)=(\ln2)^2+o(1).
	\end{align*}
	Consequently, for all $\frac{n}{2}\leq j\leq n$, 
	\begin{equation}\label{eq: 2nd mom in small}
	\mathbb{E}\left(\left(\Upsilon_{n}(j)\right)^{2}\right)\sim \left(\ln2\right)^2,\ \ \text{as}\ n\to\infty,
	\end{equation}
	Note that the latter asymptotic equivalence is uniform when $n\to\infty$ and $\frac{n}{2}\leq j\leq n$. \\
	Since for all $k\in J_{n}$, $$p_{k}\leq 2^k\leq \frac{\sqrt[4]{n}}{3},$$ for all $k\in J_{n}$
	and $1\leq j\leq n$,
	\[
	\mathbb{E}\left(\left(p_{k}U^{j}\bar{f}_{k}\right)^{4}\right)=p_{k}^{4}\alpha_{k}^{2}=\frac{p_{k}^{2}}{k\log k}\leq\frac{\sqrt{n}}{9}\frac{1}{k\log k}.
	\]
	By this, the independence of $\left\{ p_{k}U^{j}\bar{f}_{k}\right\} _{k\in J_{n}}$
	and that for all $k\in J_{n}$ and $j\in\mathbb{N}$, $\mathbb{E}\left(p_{k}U^{j}\bar{f}_{k}\right)=0$
	we see that for all $\frac{n}{2}\leq j\leq n$, 
	\begin{align*}
	\mathbb{E}\left(\left(\Upsilon_{n}(j)\right)^{4}\right) & =\mathbb{E}\left(\left(\sum_{k\in J_{n}}p_{k}U^{j}\bar{f}_{k}\right)^{4}\right)\\
	& \leq\sum_{k\in J_{n}}\mathbb{E}\left(\left(p_{k}U^{j}\bar{f}_{k}\right)^{4}\right)+\left(\sum_{k\in J_{n}}\mathbb{E}\left(\left(p_{k}U^{j}\bar{f}_{k}\right)^{2}\right)\right)^{2}\\
	& \leq\frac{\sqrt{n}}{9}\sum_{k\in J_{n}}\frac{1}{k\log k}+\left(\mathbb{E}\left(\left(\Upsilon_{n}(j)\right)^{2}\right)\right)^{2}\\
	&=\frac{\sqrt{n}}{9}\left(\ln 2\right)^2+o\left(\sqrt{n}\right).
	\end{align*}
There exists $N$ such that for all $n>N$, and $\frac{n}{2}\leq j\leq n$
	\begin{equation}
	\mathbb{E}\left(\left(\Upsilon_{n}(j)\right)^{2}\right)\geq\frac{(\ln2)^2}{2}\label{eq:jjj}
	\end{equation}
	and in addition
	\[
	\mathbb{E}\left(\left(\Upsilon_{n}(j)\right)^{4}\right)\leq\frac{2(\ln2)^2}{9}\sqrt{n}\leq\frac{4\sqrt{n}}{9}\mathbb{E}\left(\left(\Upsilon_{n}(j)\right)^{2}\right).
	\]
	Furthermore, $\left\{U^i\bar{f}_k\right\}_{\left\{k\in J_n:\ 2^k\leq j\right\}}$ is a sequence of independent and symmetric random variables, thus
	\[
	\mathbb{E}\left(\Upsilon_n(j)\right)=\mathbb{E}\left(\left(\Upsilon_n(j)\right)^3\right)=0.
	\] 
	It follows that for all $n/2\leq j\leq n$, \footnote{See for example \cite{DurrPT2}[pp. 101-103]} and $|t|\leq\frac{1}{\sqrt[4]{n}}$
	\begin{align*}
	\left|\mathbb{E}\left(\exp\left(it\Upsilon_{n}(j)\right)\right)\right| & \leq 1-\frac{E\left(\left(\Upsilon_{n}(j)\right)^{2}\right)}{2}t^{2}+\frac{t^{4}}{4!}\mathbb{E}\left(\left(\Upsilon_{n}(j)\right)^{4}\right)\\
	& \leq1-E\left(\left(\Upsilon_{n}(j)\right)^{2}\right)t^{2}\left(\frac{1}{2}-\frac{4\sqrt{n}}{9}t^{2}\right)\ \ \ \sqrt{n}t^{2}\leq1\ \text{and}\ \eqref{eq:jjj}\\
	& \leq1-\frac{(\ln2)^2}{36}t^{2}\leq\exp\left(-\frac{(\ln 2)^2}{36}t^{2}\right).
	\end{align*}
	
	Finally by independence of $\left\{\Upsilon_{n}(j)\right\} _{j=1}^{n}$,
	for all $|x|\leq\sqrt[4]{n}$, 
	\begin{align}
	\left|\mathbb{E}\left(\exp(ix\mathsf{Z}_n/\sqrt{n}\right)\right| & =\prod_{j=1}^{n}\left|\mathbb{E}\left(\exp(ix\Upsilon_{n}(j)/\sqrt{n}\right)\right|\label{eq: CF}\\
	& \leq\prod_{j=n/2}^{n}\left|\mathbb{E}\left(\exp(ix\Upsilon_{n}(j)/\sqrt{n}\right)\right|\nonumber \\
	& \leq\prod_{j=n/2}^{n}\exp\left(-\frac{(\ln2)^2}{36n}x^{2}\right)=\exp\left(-\frac{(\ln 2)^2}{72}x^{2}\right).\nonumber 
	\end{align}
	The conclusion follows with $L=\frac{(\ln2)^2}{72}$. 
\end{proof}
\begin{proof}[Proof of Lemma \ref{lem: dom near 0}]
Let $N$ and $L$ be as in Lemma \ref{lem: small t}. Since $\mathsf{Z}_n$  and $\mathsf{U}_n-\mathsf{Z}_n$ are independent, then for all $n>N$, and $|x|\leq \sqrt[4]{n}$,
\begin{align*}
	\left|\mathbb{E}\left(\exp(ix\mathsf{U}_n/\sqrt{n}\right)\right|&=\left|\mathbb{E}\left(\exp(ix\left(\mathsf{U}_n-\mathsf{Z}_n\right)/\sqrt{n}\right)\right| \left|\mathbb{E}\left(\exp(ix\mathsf{Z}_n/\sqrt{n}\right)\right| \\
	&\leq 	\left|\mathbb{E}\left(\exp(ix\mathsf{Z}_n/\sqrt{n}\right)\right|\\
	& \leq \exp\left(-Lx^2)\right.
\end{align*}
\end{proof}
\section{Appendix} 
The first result in the appendix is that the local limit theorem persists under addition of small independent noise. These type of arguments and statements are not new. We include a statement which is especially tailored for our construction. 
\begin{proposition}\label{prop: essential part}
Suppose that $X_n=Y_n+Z_n$ which are for each $n\in\mathbb{N}$, $Y_n$ and $Z_n$ are $\mathbb{Z}$-valued independent random variables. If 
\begin{equation}\label{eq: LLT APP}
\sup_{x\in\mathbb{Z}}\left|\sqrt{n}\mathbb{P}\left(Y_n=x\right)-\frac{1}{\sqrt{2\pi\sigma^2}}e^{-x^2/2n\sigma^2}\right|\xrightarrow[n\to\infty]{}0
\end{equation}
and $\mathbb{E}\left(|Z_n|^2\right)=O\left(\frac{n}{\sqrt{\log(n)}}\right)$ then 
 \[
 \sup_{x\in\mathbb{Z}}\left|\sqrt{n}\mathbb{P}\left(X_n=x\right)-\frac{1}{\sqrt{2\pi\sigma^2}}e^{-x^2/2n\sigma^2}\right|\xrightarrow[n\to\infty]{}0
 \]
\end{proposition}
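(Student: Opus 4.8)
The plan is to give a direct, non-Fourier argument based on the convolution identity coming from independence, together with the uniform boundedness and the $O(1/\sqrt n)$ Lipschitz constant of the Gaussian profile. Set
\[
\psi_n(x):=\frac{1}{\sqrt{2\pi\sigma^2}}\,e^{-x^2/(2n\sigma^2)},
\]
and observe that $\psi_n(x)=\Phi(x/\sqrt n)$, where $\Phi$ is the density of the centered Gaussian law with variance $\sigma^2$. In particular $\|\Phi\|_\infty<\infty$ and $\|\Phi'\|_\infty<\infty$, so $\psi_n$ is bounded uniformly in $n$ and
\[
|\psi_n(x)-\psi_n(y)|\le \frac{\|\Phi'\|_\infty}{\sqrt n}\,|x-y|\qquad\text{for all }x,y\in\mathbb{R}.
\]
Since $Y_n$ and $Z_n$ are independent and $\mathbb{Z}$-valued, for every $x\in\mathbb{Z}$ one has $\mathbb{P}(X_n=x)=\sum_{z\in\mathbb{Z}}\mathbb{P}(Z_n=z)\,\mathbb{P}(Y_n=x-z)$.

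Next I would feed in the hypothesis \eqref{eq: LLT APP}, written as $\sqrt n\,\mathbb{P}(Y_n=w)=\psi_n(w)+\eta_n(w)$ with $\epsilon_n:=\sup_{w\in\mathbb{Z}}|\eta_n(w)|\to 0$. Multiplying the convolution identity by $\sqrt n$ gives
\[
\sqrt n\,\mathbb{P}(X_n=x)=\mathbb{E}\bigl[\psi_n(x-Z_n)\bigr]+\sum_{z\in\mathbb{Z}}\mathbb{P}(Z_n=z)\,\eta_n(x-z),
\]
and the last sum is at most $\epsilon_n\sum_{z}\mathbb{P}(Z_n=z)=\epsilon_n$ in absolute value, uniformly in $x$.

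It then remains to compare $\mathbb{E}[\psi_n(x-Z_n)]$ with $\psi_n(x)$. By the Lipschitz estimate above and the Cauchy--Schwarz inequality,
\[
\bigl|\mathbb{E}[\psi_n(x-Z_n)]-\psi_n(x)\bigr|\le \mathbb{E}\bigl|\psi_n(x-Z_n)-\psi_n(x)\bigr|\le \frac{\|\Phi'\|_\infty}{\sqrt n}\,\mathbb{E}|Z_n|\le \frac{\|\Phi'\|_\infty}{\sqrt n}\bigl(\mathbb{E}|Z_n|^2\bigr)^{1/2},
\]
and the hypothesis $\mathbb{E}(|Z_n|^2)=O(n/\sqrt{\log n})$ makes the right-hand side $O\bigl((\log n)^{-1/4}\bigr)$. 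Combining the three displays yields
\[
\sup_{x\in\mathbb{Z}}\bigl|\sqrt n\,\mathbb{P}(X_n=x)-\psi_n(x)\bigr|\le \epsilon_n+O\bigl((\log n)^{-1/4}\bigr)\xrightarrow[n\to\infty]{}0,
\]
which is exactly the asserted local limit theorem for $X_n$.

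I do not expect a genuine obstacle here: the argument uses only the uniform boundedness and the decay of the Lipschitz constant of the limiting Gaussian density, and the moment hypothesis on $Z_n$ is considerably stronger than necessary (any bound of the form $\mathbb{E}|Z_n|^2=o(n^2)$ would suffice). The one point that needs a little care is that every estimate must be uniform in $x\in\mathbb{Z}$ — this is what turns the pointwise bounds into the supremum statement — and this uniformity is immediate, since the error $\epsilon_n$ coming from \eqref{eq: LLT APP} does not depend on $x$ and the Lipschitz constant of $\psi_n$ is independent of $x$.
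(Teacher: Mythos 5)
Your proof is correct, and it takes a genuinely different (and shorter) route than the paper's. The paper truncates $Z_n$ at the scale $a(n)=\left(n/\sqrt[4]{\log n}\right)^{1/2}$ via Markov's inequality and then compares $e^{-(x-z)^2/2n}$ with $e^{-x^2/2n}$ for $|z|\le a(n)$ by splitting into the cases $|x|>\sqrt{n}\log^{1/9}(n)$ and $|x|\le\sqrt{n}\log^{1/9}(n)$; you replace all of this by the single observation that $\psi_n(\cdot)=\Phi(\cdot/\sqrt n)$ is globally Lipschitz with constant $\|\Phi'\|_\infty/\sqrt n$, so that $\left|\mathbb{E}\left[\psi_n(x-Z_n)\right]-\psi_n(x)\right|\le \|\Phi'\|_\infty\,\mathbb{E}|Z_n|/\sqrt n$ uniformly in $x$, while the uniform error $\epsilon_n$ from \eqref{eq: LLT APP} passes through the convolution untouched. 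What your approach buys is the elimination of the truncation and the case analysis, and a cleaner accounting of which hypothesis does what: the LLT for $Y_n$ enters only through $\epsilon_n$, and the moment condition only through $\mathbb{E}|Z_n|=o(\sqrt n)$. One small correction to your closing remark: the condition your argument actually needs is $\mathbb{E}\left(|Z_n|^2\right)=o(n)$ (equivalently $\mathbb{E}|Z_n|=o(\sqrt n)$ after Cauchy--Schwarz), not $o(n^2)$; under $\mathbb{E}\left(|Z_n|^2\right)=o(n^2)$ your bound on $\left|\mathbb{E}\left[\psi_n(x-Z_n)\right]-\psi_n(x)\right|$ would only be $o(\sqrt n)$, which is vacuous. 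The stated hypothesis $O\left(n/\sqrt{\log n}\right)$ is of course $o(n)$, so the proof itself is unaffected.
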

In order to simplify the notation in the proof, we would make use of the following reformulation of \eqref{eq: LLT APP}: There exists $r:\mathbb{N}\to [0,\infty)$ such that $r(n)\sqrt{n}\to 0$ and for all $x\in \mathbb{Z}$, 
\[
\mathbb{P}\left(Y_n=x\right)=\frac{1}{\sqrt{2\pi n\sigma^2}}e^{-\frac{x^2}{2n\sigma^2}}\pm r(n). 
\]
In the course of the proof, the function $r(n)$ will denote a $o\left(\frac{1}{\sqrt{n}}\right)$ function which may change from line to line. 
\begin{proof}
By changing from $X_n$ to $\frac{1}{\sigma}X_n$ we can and will assume that $\sigma^2=1$. Write $a(n)=\left(\frac{n}{\sqrt[4]{\log(n)}}\right)^{\frac{1}{2}}$. By Markov inequality,
\[
\mathbb{P}\left(\left|Z_n\right|\geq a(n)\right)\leq \frac{D}{\sqrt[4]{\log(n)}},
\]
where $D$ is any constant such that $\sup_{n\in\mathbb{N}}\left[\frac{\sqrt[4]{\log(n)}}{n}\mathbb{E}\left(\left|Z_n\right|^2\right)\right]\leq D$.\\
Fix $x\in\mathbb{Z}$ and note that as $Y_n$ and $Z_n$ are independent,
\[
\mathbb{P}\left(X_n=x\right)=\sum_{z\in\mathbb{Z}}\mathbb{P}\left(Y_n=x-z\right)\mathbb{P}\left(Z_n=z\right).
\]
We split the sum into $|z|\leq a_n$ and $|z|>a_n$. \\
A consequence of \eqref{eq: LLT APP} is that there exists $C>0$ such that for all $y\in\mathbb{Z}$ and $n\in\mathbb{N}$,
\[
\mathbb{P}\left(Y_n=y\right)\leq \frac{C}{\sqrt{n}}. 
\]
And,
\begin{equation}\label{eq: first bound in LLT APP}
\sum_{z\in\mathbb{Z}\setminus \left[-a(n),a(n)\right]}\mathbb{P}\left(Y_n=x-z\right)\mathbb{P}\left(Z_n=z\right)\leq \frac{C}{\sqrt{n}}\mathbb{P}\left(\left|Z_n\right|\geq a(n)\right)\leq \frac{CD}{\sqrt{n}\sqrt[4]{\log(n)}}.
\end{equation}
If $|z|\leq a(n)$ and $|x|>\sqrt{n}\log^{1/9}(n)$ then for all large $n$, uniformly in $z=\pm a(n)$, 
\[
\exp\left(-\frac{(x-z)^2}{2n}\right)\leq \exp\left(-\frac{x^2}{4n}\right)=\exp\left(-\frac{x^2}{2n}\right)+o(1).
\]
The last equality up to a $o(1)$ term follows from $x^2/n\geq  \log^{2/9}(n)\to \infty$ as $n\to\infty$.\\
If $|x|\leq \sqrt{n}\log^{1/9}(n)$ then $|xz|\leq\frac{n}{(\log(n))^{\frac{1}{72}}}$, thus
\begin{align*}
\exp\left(-\frac{(x-z)^2}{2n}\right) &= \exp\left(-\frac{x^2}{2n}\right)\exp\left(\pm \frac{|xz|)}{2n}\right)\\
&=\exp\left(-\frac{x^2}{2n}\right)\exp\left(\pm \frac{1}{2(\log(n))^{\frac{1}{72}}}\right)\\
&=\exp\left(-\frac{x^2}{2n}\right)+o(1).
\end{align*}
Using these rather trivial bounds and the reformulation of \eqref{eq: LLT APP} we conclude that 
\begin{align*}
\sum_{z\in\mathbb{Z}\cap \left[-a(n),a(n)\right]}\mathbb{P}\left(Y_n=x-z\right)\mathbb{P}\left(Z_n=z\right)&= \sum_{z\in\mathbb{Z}\cap \left[-a(n),a(n)\right]} \frac{\exp\left(-\frac{(x-z)^2}{2n}\right))}{\sqrt{2\pi n}}\mathbb{P}\left(Z_n=z\right)\pm r(n)\\
&=\sum_{z\in\mathbb{Z}\cap \left[-a(n),a(n)\right]} \left[\frac{\exp\left(-\frac{x^2}{2n}\right)}{\sqrt{2\pi n}}+o(1)\right]\mathbb{P}\left(Z_n=z\right)\pm r(n)\\
&=\mathbb{P}\left(\left|Z_n\right|\leq a_n\right)\left[\frac{1}{\sqrt{2\pi n}}\exp\left(-\frac{x^2}{2n}\right)\right] \pm (o(1)+r(n))\\
\end{align*}
As $\mathbb{P}\left(\left|Z_n\right|\leq a_n\right)=1+o(1)$,
\[
\sum_{z\in\mathbb{Z}\cap \left[-a(n),a(n)\right]}\mathbb{P}\left(Y_n=x-z\right)\mathbb{P}\left(Z_n=z\right)=\frac{1}{\sqrt{2\pi n}}\exp\left(-\frac{x^2}{2n}\right)+o(1).
\] 
The conclusion follows from the latter asymptotic equality and \eqref{eq: first bound in LLT APP}. 
\end{proof}
The following estimate is used in bounding the $L^2$ norm of the first term in Proposition \ref{prop: decomp for CLT}.
\begin{lemma}\label{lem: app1}
There exists a constanst $K>0$ such that for all $n\in\mathbb{N}$,
$$\sum_{k\in\mathbb{N}:\ 2\leq k\leq \sqrt{\log n}}\frac{2^{k^2}}{k\log k}\leq K\frac{n}{\sqrt{\log(n)}}. $$
\end{lemma}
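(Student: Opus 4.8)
The plan is to estimate the sum by comparing it to its largest term, which corresponds to $k=\lfloor\sqrt{\log n}\rfloor$, and then showing the preceding terms decay geometrically fast enough that the whole sum is comparable to this last term. Write $m:=\lfloor\sqrt{\log n}\rfloor$, so that $2\le k\le m$ in the sum. For the top term, $2^{m^2}\le 2^{\log n}=n$, and since $m\sim\sqrt{\log n}$ we have $\frac{1}{m\log m}$ of order $\frac{1}{\sqrt{\log n}\cdot\log\log n}$; in particular $\frac{2^{m^2}}{m\log m}\lesssim \frac{n}{\sqrt{\log n}}$, which is already the claimed bound for a single term.

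The key observation is that the exponents $k^2$ grow superlinearly, so consecutive terms satisfy $2^{k^2}/2^{(k-1)^2}=2^{2k-1}\ge 2^{2\cdot 2-1}=8$ for all $k\ge 2$. Ignoring the slowly varying factor $\frac{1}{k\log k}$ (which is monotone decreasing in $k$, hence only helps), this means the term at index $k-1$ is at most $\tfrac18$ times the term at index $k$, so the partial sums form (essentially) a geometric series dominated by its last term:
\[
\sum_{k=2}^{m}\frac{2^{k^2}}{k\log k}\le \frac{2^{m^2}}{m\log m}\sum_{j=0}^{m-2}8^{-j}\le \frac{8}{7}\cdot\frac{2^{m^2}}{m\log m}.
\]
More carefully, since $\frac{1}{k\log k}$ is decreasing one bounds $\frac{2^{k^2}}{k\log k}\le 8^{-(m-k)}\frac{2^{m^2}}{m\log m}$ for each $k\le m$ by telescoping the ratios, and then sums the geometric series in $m-k$.

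It then remains to absorb $\frac{2^{m^2}}{m\log m}$ into $K\frac{n}{\sqrt{\log n}}$. Using $2^{m^2}\le n$ and $m\ge \sqrt{\log n}-1$, one gets $\frac{2^{m^2}}{m\log m}\le \frac{n}{(\sqrt{\log n}-1)\log(\sqrt{\log n}-1)}$, which is $O\!\big(\frac{n}{\sqrt{\log n}}\big)$; combined with the constant $\tfrac87$ from the geometric sum this yields the existence of the absolute constant $K$ (one must handle small $n$, where $\sqrt{\log n}<2$ and the sum is empty or has one term, separately — there the bound is trivial). I do not expect any genuine obstacle here; the only mild care needed is that the bound must be uniform in $n$, so the constant $K$ has to be chosen to accommodate the finitely many small values of $n$ as well as the asymptotic regime, and one should make sure the monotonicity of $k\mapsto\frac{1}{k\log k}$ is used in the right direction when telescoping the ratios of successive terms.
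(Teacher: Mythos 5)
Your argument is correct and is essentially the paper's own proof: the authors note that the terms $a_k=2^{k^2}/(k\log k)$ grow at least geometrically (the ratio of consecutive terms is bounded below by some $q>1$), so the sum is at most a constant multiple of its last term, which is $O\!\left(n/\sqrt{\log n}\right)$ since $2^{m^2}\le n$ for $m=\lfloor\sqrt{\log n}\rfloor$. One small correction: the monotonicity of $k\mapsto 1/(k\log k)$ works \emph{against} you, not for you. Indeed $a_{k-1}/a_k=2^{-(2k-1)}\cdot\frac{k\log k}{(k-1)\log(k-1)}$, and the second factor exceeds $1$, so the decreasing prefactor makes the earlier term relatively larger, and likewise the telescoped bound $\frac{2^{k^2}}{k\log k}\le 8^{-(m-k)}\frac{2^{m^2}}{m\log m}$ is not a consequence of that monotonicity alone. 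The proof survives because the exponential gain has ample room: for $k\ge 3$ one has $\frac{k\log k}{(k-1)\log(k-1)}\le\frac{3\log 3}{2\log 2}<4$ while $2^{2k-1}\ge 32$, so $a_k/a_{k-1}\ge 8$ still holds and the geometric-series comparison goes through; you should just replace the parenthetical ``hence only helps'' by this explicit check.
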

\begin{lemma}\label{lem: app2}
	$$\sum_{k: 2^k\leq n}\frac{2^k}{k}=O\left(\frac{n}{\log n}\right).$$
\end{lemma}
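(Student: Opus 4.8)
The plan is to exploit the fact that the summands $a_k:=2^k/k$ grow faster than any geometric sequence, so that the whole sum is comparable to its last term, which is itself of order $n/\log n$. Write $K:=\lfloor \log n\rfloor$; since $\log$ is taken to base $2$, the condition $2^k\le n$ is exactly $k\le \log n$, so the range of summation is $1\le k\le K$, and note $K\ge \log n-1$.

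First I would split the sum at the midpoint $m:=\lfloor K/2\rfloor$. On the low range $1\le k\le m$ one simply bounds $2^k/k\le 2^k$, so $\sum_{k=1}^{m}2^k/k<2^{m+1}\le 2\cdot 2^{K/2}\le 2\sqrt n$, and $\sqrt n=o(n/\log n)$ because $\log n/\sqrt n\to 0$. On the high range $m<k\le K$ the denominator is bounded below by $m$, and since $m\ge (K-1)/2\ge \tfrac12\log n-1$ we have $1/k\le 1/m\le 4/\log n$ for all large $n$; hence $\sum_{m<k\le K}2^k/k\le \tfrac{4}{\log n}\sum_{m<k\le K}2^k\le \tfrac{4}{\log n}\cdot 2^{K+1}\le \tfrac{8\cdot 2^K}{\log n}\le \tfrac{8n}{\log n}$, using $2^K\le n$. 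Adding the two ranges gives $\sum_{2^k\le n}2^k/k\le 8n/\log n+2\sqrt n=O(n/\log n)$, as claimed. (An equivalent route is to observe $a_{k+1}/a_k=2k/(k+1)\ge 3/2$ for $k\ge 3$, so reading the sum from the top it is dominated by a geometric series of ratio $\le 2/3$, whence $\sum_{k=1}^{K}a_k\le 3a_K+O(1)=O(2^K/K)=O(n/\log n)$ since $K\ge\tfrac12\log n$.)

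There is essentially no obstacle here; the proof is two or three lines. The only point deserving a moment of care is that the polynomial factor $1/k$ must not be allowed to spoil the geometric domination — this is precisely why one separates the indices $k\lesssim \tfrac12\log n$ (where the contribution is already negligible, of size $O(\sqrt n)$) from the top indices (where $1/k\asymp 1/\log n$ can be pulled out of the sum), after which the trivial bound $\sum 2^k\le 2^{K+1}\le 2n$ finishes the argument.
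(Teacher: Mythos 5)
Your proof is correct. Your primary route (split the range $1\le k\le K$, $K=\lfloor\log n\rfloor$, at $m=\lfloor K/2\rfloor$; bound the low half crudely by $O(\sqrt{n})$ and pull $1/k\le 4/\log n$ out of the high half before summing the geometric series) is a genuinely different decomposition from the paper's, which instead observes that $a_{k+1}/a_k=2k/(k+1)\ge q>1$ for $k$ beyond a fixed threshold, so the whole sum is dominated by a constant multiple of its last term $a_K=2^K/K=O(n/\log n)$ -- a one-line argument that the paper uses to dispatch Lemmas \ref{lem: app1} and \ref{lem: app2} simultaneously (the same ratio test handles $2^{k^2}/(k\log k)$). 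Your parenthetical ``equivalent route'' is in fact exactly the paper's proof, so you have both arguments in hand; the dyadic split is slightly longer but more robust (it would survive even if the ratio $a_{k+1}/a_k$ were only eventually $\ge 1+o(1)$ rather than bounded away from $1$), while the paper's version generalizes immediately to the heavier-tailed sum of Lemma \ref{lem: app1}, which your midpoint split would need to be re-tuned for.
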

\begin{proof}[Proof of Lemmas \ref{lem: app2} and \ref{lem: app2}]
Both results follow from the following reasoning. If $\left(a_k\right)_{k=1}^\infty$ is a sequence of positive reals such that there exists $q>1$ for which for all $n\in\mathbb{N}$, $a_{n+1}/a_n\geq q$ then for all $n\in\mathbb{N}$,
\[
\sum_{k=1}^na_k\leq a_n\sum_{k=1}^\infty q^{-k}\leq \frac{a_n}{1-q}.
\]
\end{proof}

\bibliographystyle{abbrv}
\bibliography{embedding}
\end{document}